\newcommand{\R}{\mathbb{R}}
\newtheoremstyle{mythm}
{}
{}
{\slshape}
{}
{\bfseries\sffamily}
{.}
{ }
{}
\newtheoremstyle{mydef}
{}
{}
{}
{}
{\bfseries\sffamily}
{.}
{ }
{}
\theoremstyle{mythm}
\newtheorem{thm}{Theorem}[section]
\newtheorem{prob}[thm]{Problem}
\newtheorem{prop}[thm]{Proposition}
\newtheorem{cor}[thm]{Corollary}
\theoremstyle{mydef}
\newtheorem{mydef}[thm]{Definition}
\newtheorem{rem}[thm]{Remark}
\apptocmd{\sloppy}{\hbadness 10000\relax}{}{}
\title{Four-dimensional Einstein manifolds with Heisenberg symmetry}
\author{V.\ Cortés}
\author{A.\ Saha}
\affil{\normalsize  Department of Mathematics \\
University of Hamburg\\
Bundesstra\ss e 55, D-20146 Hamburg, Germany\\
vicente.cortes@uni-hamburg.de, arpan.saha@uni-hamburg.de}
\date{}
\begin{document}
\maketitle

\begin{abstract}
We classify Einstein metrics on $\mathbb{R}^4$ invariant under a four-dimensional group of isometries
including a principal action of the Heisenberg group. We consider metrics which are either Ricci-flat or of negative Ricci curvature.
We show that all of the Ricci-flat metrics, including the simplest ones which are hyper-K\"ahler, are incomplete. 
By contrast, those of negative Ricci curvature contain precisely two complete examples: the 
complex hyperbolic metric and a metric of cohomogeneity one known as the one-loop deformed universal hypermultiplet.
	
	\par
	\emph{Keywords: Einstein metrics, cohomogeneity one}\par
	\emph{MSC classification: 53C26.}
\end{abstract}

\clearpage

\tableofcontents
\clearpage

\section{Introduction}
It has been recently shown \cite{CST} that all the known homogeneous quaternionic K\"ahler manifolds of negative 
Ricci curvature with exception of the simplest examples, the quaternionic hyperbolic spaces, admit a canonical deformation to 
a complete quaternionic K\"ahler manifold with an isometric action of cohomogeneity one. The deformation is a special case 
of what is known as the one-loop deformation \cite{RSV2006}. 
The simplest example is a deformation of the complex hyperbolic plane known as the one-loop deformed universal hypermultiplet \cite{AMTV}. 
(The completeness requires the deformation parameter to be non-negative \cite[Proposition 4]{ACDM}.) 
Its isometry group is precisely $\mathrm{O}(2)\ltimes \mathrm{H}$, where $H$ is the three-dimensional Heisenberg group \cite{CST}.

In this paper we determine all Einstein metrics  which are invariant under the action of $\mathrm{SO}(2)\ltimes \mathrm{H}$ on 
$\mathbb{R}^4$. The symmetry assumption reduces the problem to the solution of a system of second order ordinary 
differential equations for a pair of functions $a$, $b$, see  (\ref{firstprime:eq})-(\ref{thirdprime:eq}). The corresponding metrics are 
of the form 
\[ g= dt^2+a(t)(dz+xdy-ydx)^2 + b(t)(dx^2+dy^2).\] 

The system admits 
solutions if and only if the Einstein constant $\Lambda$ is non-positive. 

The Ricci-flat solutions include simple solutions of hyper-K\"ahler type (Proposition \ref{HK:prop}) as well as more complicated solutions (Proposition \ref{pr:flat-case}). They are all incomplete. 

The solutions of negative Ricci curvature are described in 
Proposition \ref{prop:K-constraint}. The stationary solutions are isometric to the complex hyperbolic plane
(Proposition \ref{stat:sec}). The one-loop deformed universal hypermultiplet corresponds to a particular solution 
(Proposition \ref{1lp:prop}), interpolating between a stationary solution 
and another fixed point of the flow 
defined by the subsystem 
(\ref{firstprime:eq})-(\ref{secondprime:eq}).  

The main result is that the only complete $\mathrm{SO}(2)\ltimes \mathrm{H}$-invariant Einstein metrics  
on $\mathbb{R}^4$ are the complex hyperbolic metric and its complete one-loop deformation (Theorem \ref{main:thm}).

{\bfseries Acknowledgements}

This work was supported by the German Science Foundation (DFG) under Germany's Excellence Strategy  --  EXC 2121 ``Quantum Universe'' -- 390833306. 
We thank Maciej Dunajski, \'Angel Murcia and Simon Salamon for useful comments.

\clearpage

\section{Riemannian metrics with Heisenberg symmetry}
In this section we describe the class of metrics for which we will study the Einstein equation. 
\subsection{The Heisenberg group}
Recall that the Heisenberg group $H$ is the unique simply connected nilpotent Lie group of dimension $3$, up to isomorphism. 
We choose to realize it as $\mathbb{R}^3$ endowed with the following product:
\begin{equation} \label{heis:eq}(x,y,z) \cdot (a,b,c) = (a+x,b+y,c+z+ya-xb) \end{equation}
The advantage over other natural realizations (e.g.\ as the group of unipotent upper triangular matrices of rank $3$) is that 
the group $\mathrm{SL}(2,\mathbb{R})$ of unimodular transformations in the $(x,y)$-plane acts by automorphisms in these coordinates. 
This follows from Proposition \ref{aut:prop} below. 
Abbreviating $v=(x,y)^\top$, we can write (\ref{heis:eq}) more compactly as $(v_1,z_1)\cdot (v_2,z_2) = (v_1+v_2,z_1+z_2 -\omega (v_1,v_2))$, 
where $\omega = dx\wedge dy$. This implies the following. 
\begin{prop} \label{aut:prop} For any $A\in \mathrm{GL}(2,\mathbb{R})$ the transformation $(v,z)\mapsto (Av,z\det A )$ is an automorphism of
$H$.
\end{prop}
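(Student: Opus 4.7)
The plan is to verify directly that $\phi_A\colon (v,z)\mapsto (Av,z\det A)$ is a group homomorphism, and then note that invertibility follows from $A\in \mathrm{GL}(2,\R)$ and $\det A\neq 0$. The computation is short because the group law has been rewritten in the compact form $(v_1,z_1)\cdot(v_2,z_2) = (v_1+v_2,z_1+z_2-\omega(v_1,v_2))$, so everything comes down to tracking how $\omega=dx\wedge dy$ transforms.

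Concretely, I would expand $\phi_A((v_1,z_1)\cdot(v_2,z_2))$ using the given product and linearity of $A$, obtaining $(Av_1+Av_2,(z_1+z_2)\det A-\omega(v_1,v_2)\det A)$. Then I would expand $\phi_A(v_1,z_1)\cdot\phi_A(v_2,z_2)$ using the same product, obtaining $(Av_1+Av_2,z_1\det A+z_2\det A-\omega(Av_1,Av_2))$. Comparing the two expressions, the homomorphism property reduces to the single identity
\[ \omega(Av_1,Av_2) = (\det A)\,\omega(v_1,v_2). \]
This is precisely the defining property of the determinant as the factor by which the area form $dx\wedge dy$ rescales under a linear endomorphism of $\R^2$, i.e.\ $A^*\omega=(\det A)\omega$, so there is nothing further to verify here.

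Finally, I would observe that $\phi_A$ is a bijection because $A$ is invertible and $\det A\neq 0$, with explicit inverse $\phi_{A^{-1}}$ (noting $\det(A^{-1})=(\det A)^{-1}$). There is no real obstacle in this proof; the only point that deserves being highlighted is that the factor $\det A$ in the $z$-component is forced by how $\omega$ transforms, which is also the reason the compact rewriting of the group law immediately before the proposition is useful. I would therefore keep the proof to at most a few lines, emphasising the single identity above as the key input.
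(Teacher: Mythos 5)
Your proof is correct and follows exactly the route the paper intends: the compact form of the group law reduces everything to the identity $\omega(Av_1,Av_2)=(\det A)\,\omega(v_1,v_2)$, which the paper leaves implicit when it says the compact rewriting ``implies'' the proposition. Your spelled-out verification, together with the observation that $\phi_{A^{-1}}$ is the inverse, is precisely that argument.
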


From (\ref{heis:eq}) we can 
immediately read off that the left-invariant parallelization extending the standard basis at the neutral element is given by
\begin{equation}\label{frame:eq} e_1=\partial_x +y\partial_z,\quad e_2= \partial_y-x\partial_z,\quad e_3=\partial_z,\end{equation}
with non-trivial structure constants determined by $[e_1,e_2] = -2e_3$ or, equivalently, by $de^3=2e^1\wedge e^2$ in terms
of the dual frame $(e^i)=(e_i^*)$.

\begin{prop} 
The isometry group $\mathrm{Isom}(H,g)$ of any left-invariant metric $g$ on $H$ is conjugate to $\mathrm{O}(2)\ltimes  H$ 
in  $\mathrm{Aut}(H)\ltimes  H$.
\end{prop}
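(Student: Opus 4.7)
The natural starting point is that $H$ acts on itself by left translations $L_h$, which are isometries of any left-invariant metric $g$. Since this action is simply transitive, $\mathrm{Isom}(H, g) = K \cdot H$, where $K$ denotes the stabilizer of $e \in H$. The first key input is a classical theorem of Wilson for simply connected nilpotent Lie groups, which states that every isometry of $(H, g)$ fixing $e$ is a Lie group automorphism. This gives $K \subseteq \mathrm{Aut}(H)$, so $\mathrm{Isom}(H, g) \subseteq \mathrm{Aut}(H) \ltimes H$, which is the ambient group within which the conjugation is to take place.

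Next, I would describe $\mathrm{Aut}(H)$ at the Lie algebra level. Since $\mathbb{R} e_3$ is the center of $\mathfrak{h}$, every automorphism preserves it, and using $[e_1, e_2] = -2 e_3$ a short bracket computation shows that the derivative at $e$ must take the block form
\[
  D\phi|_e = \begin{pmatrix} A & 0 \\ c^\top & \det A \end{pmatrix}, \qquad A \in \mathrm{GL}(2, \mathbb{R}),\ c \in \mathbb{R}^2,
\]
in the basis $(e_1, e_2, e_3)$. Conversely, all such pairs arise: Proposition~\ref{aut:prop} supplies the $A$-part, and one directly verifies that $(x, y, z) \mapsto (x, y, z + c_1 x + c_2 y)$ is an automorphism supplying the $c$-part.

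The main step is then to normalize the inner product $g_e$ on $\mathfrak{h}$ using this freedom. An automorphism of the form $(\mathrm{id}, c)$ with $c_i = -g_e(e_i, e_3)/g_e(e_3, e_3)$ makes $e_3$ orthogonal to the span of $e_1, e_2$; a subsequent automorphism $(A, 0)$ with $A = (g_e|_{\mathrm{span}(e_1, e_2)})^{-1/2}$ brings $g_e$ into the normal form $\mathrm{diag}(1, 1, \mu)$ for some $\mu > 0$. A direct computation of $D\phi|_e^\top g_e D\phi|_e = g_e$ for this normalized $g_e$ then shows that the preserving automorphisms are precisely those with $c = 0$ and $A \in \mathrm{O}(2)$, so $K \cong \mathrm{O}(2)$ sitting inside $\mathrm{Aut}(H)$ as $\{(A, 0) : A \in \mathrm{O}(2)\}$.

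The main obstacle is Wilson's theorem itself: a priori an isometry fixing $e$ need only preserve $g_e$ as a linear map on $\mathfrak{h}$, and the statement that it must additionally respect the Lie bracket is a non-trivial structural feature of simply connected nilpotent Lie groups, not shared by Lie groups in general. Once this input is granted, the remainder reduces to an exercise in linear algebra on the three-dimensional space $\mathfrak{h}$, and the two normalization steps above account for the conjugation in $\mathrm{Aut}(H) \ltimes H$ that produces the claimed form of $\mathrm{Isom}(H, g)$.
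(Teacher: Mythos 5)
Your argument is correct and follows essentially the same route as the paper: Wilson's theorem reduces the isometry group to $\mathrm{Aut}(H,g)\ltimes H$, and the rest is the linear-algebra normalization of $g_e$ by automorphisms (making the center orthogonal to $\mathrm{span}(e_1,e_2)$ and rescaling), which identifies the stabilizer with $\mathrm{O}(2)$ up to conjugation in $\mathrm{Aut}(H)$. You merely make explicit the block description of $\mathrm{Aut}(\mathfrak{h})$ and the computation $M^\top g_e M = g_e$ that the paper summarizes as ``every isometric automorphism preserves the center and its orthogonal complement.''
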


\begin{proof}
By \cite{W}, the isometry group of $(H,g)$ is $\mathrm{Aut}(H,g)\ltimes  H$, 
where $\mathrm{Aut}(H,g)=\mathrm{Aut}(H)\cap \mathrm{Isom}(H,g)$. Since 
every isometric automorphism preserves the center and its orthogonal complement,
we see that, up to conjugation in $\mathrm{Aut}(H)$, we have the inclusion 
$\mathrm{Aut}(H,g)\subset \mathrm{O}(2)$. On the other hand, any orthogonal 
transformation of the orthogonal complement of the center extends uniquely to an isometric automorphism.  
This shows that, up to conjugation,  $\mathrm{Isom}(H,g)=
\mathrm{O}(2)\ltimes  H$. 
\end{proof}

\subsection{Principal action of the Heisenberg group on $\mathbb{R}^4$}
Any complete Riemannian metric $g$ on $\mathbb{R}^4$ invariant under 
a principal action of the Heisenberg group $H$ can be brought to the form 
\begin{equation}\label{H-inv:eq}g = dt^2 + g_t, \end{equation}
where $\mathbb{R}^4$ is identified with $\mathbb{R}\times H$ by an $H$-equivariant diffeomorphism and 
$g_t$ is a family of left-invariant metrics on $H$. This form is obtained by identifying the $H$-orbits by means of the normal geodesic flow, 
where $t$ corresponds to the arc length parameter along a normal geodesic.

The action of $\mathrm{Aut}(H)\ltimes  H$ on $H$ trivially extends to $\mathbb{R}^4=\mathbb{R}\times H =\{(t,x,y,z)\}$.

\begin{prop} \label{SO2H:prop}An $H$-invariant Riemannian metric $g= dt^2 + g_t$ on $\mathbb{R}^4=\mathbb{R}\times H$ is invariant under $\mathrm{SO}(2)\subset \mathrm{Aut}(H)$ if and only if 
\begin{equation} \label{gt:eq}g_t =  a(t)(dz+xdy-ydx)^2 + b(t)(dx^2+dy^2),\end{equation}
for some positive smooth functions $a, b\in C^\infty (\mathbb{R})$. 
\end{prop}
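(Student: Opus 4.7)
My plan is to reduce the question to a straightforward representation-theoretic constraint on the inner product defining the left-invariant metric $g_t$. First, I would identify the coframe dual to (\ref{frame:eq}): a direct pairing check gives $e^1 = dx$, $e^2 = dy$, and $e^3 = dz + x\,dy - y\,dx$. Any left-invariant metric on $H$ can thus be written as
\[ g_t = \sum_{1 \le i,j \le 3} g_{ij}(t)\, e^i \otimes e^j, \]
with some smooth family of positive-definite symmetric matrices $(g_{ij}(t))$.

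Next I would describe the relevant $\mathrm{SO}(2)$-action on the Heisenberg Lie algebra $\mathfrak{h}$. By Proposition \ref{aut:prop} applied to $A = R_\theta \in \mathrm{SO}(2) \subset \mathrm{GL}(2, \mathbb{R})$ (note $\det A = 1$), the map $\phi_\theta : (v, z) \mapsto (R_\theta v, z)$ is an automorphism of $H$; its derivative at the identity acts as $R_\theta$ on $\Span(e_1, e_2)$ and fixes $e_3$. Hence $g_t$ is $\mathrm{SO}(2)$-invariant precisely when the symmetric bilinear form on $\mathfrak{h}$ with matrix $(g_{ij}(t))$ is preserved under this representation, which splits into the standard $2$-dimensional representation on $\Span(e_1, e_2)$ and a trivial line.

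By Schur's lemma (equivalently, by averaging over $\mathrm{SO}(2)$), this invariance forces the upper $2 \times 2$ block of $(g_{ij}(t))$ to equal $b(t)\, I$ for some $b(t) > 0$ and the mixed entries $g_{13}(t)$, $g_{23}(t)$ to vanish; setting $a(t) := g_{33}(t) > 0$ then yields the claimed form (\ref{gt:eq}). Smoothness of $a$ and $b$ is inherited from that of $g$. The converse follows at once: $dx^2 + dy^2$ is preserved by $\phi_\theta^*$ because $R_\theta$ is orthogonal, and a brief computation using $R_\theta^\top R_\theta = I$ gives $\phi_\theta^*(x\,dy - y\,dx) = x\,dy - y\,dx$, so $\phi_\theta^* e^3 = e^3$ as well. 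I do not foresee any real obstacle here; the argument is essentially linear-algebraic, with the only bookkeeping step being the verification of the $\mathrm{SO}(2)$-invariance of the Liouville-type form $x\,dy - y\,dx$.
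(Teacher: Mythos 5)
Your argument is correct and follows essentially the same route as the paper: identify the left-invariant coframe $e^1=dx$, $e^2=dy$, $e^3=dz+x\,dy-y\,dx$, observe that $\mathrm{SO}(2)\subset\mathrm{Aut}(H)$ acts by rotations on $\Span(e^1,e^2)$ while fixing $e^3$ and $dt$, and conclude that the invariant symmetric forms are exactly those of the shape (\ref{gt:eq}). Your Schur's-lemma/averaging step simply makes explicit the linear-algebra consequence that the paper leaves implicit, so no further comment is needed.
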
 
\begin{proof}
 We remark 
that (\ref{frame:eq}) implies 
\begin{equation}e^1=dx,\quad e^2=dy,\quad e^3=dz+xdy-ydx.\end{equation}
Recall that, in terms of the coordinates $(t,x,y,z)$, the group $\mathrm{SO}(2)$ acts simply by rotations in the $(x,y)$-plane. 
The induced action on  $H$-invariant one-forms on $\mathbb{R}^4$ is by rotations in the plane spanned by $e^1, e^2$, 
whereas the one-forms $e^3$ and $dt$ are invariant.  
As a consequence, $g_t$ (and hence $g$) is $\mathrm{SO}(2)$-invariant if and only of it is of the form
(\ref{gt:eq}).\end{proof}
\begin{mydef} $\mathrm{SO}(2)\ltimes H$-invariant metrics on $\mathbb{R}^4$, as described in Proposition \ref{SO2H:prop}, 
will be called metrics with \emph{maximal Heisenberg symmetry}.
\end{mydef}

The main problem studied in this paper is the following. 
\begin{prob}
Determine all Einstein metrics on $\mathbb{R}^4$ with maximal Heisenberg symmetry.\end{prob}
The following consequence of Proposition \ref{SO2H:prop} is used in the calculations 
of the connection and the curvature in the next section. Note also that the map 
$(t,x,y,z) \mapsto (t,y,-x,t)$ is an isometry (in the group $\mathrm{SO}(2)$), 
which can be also used for that purpose. 

\begin{cor} Any metric $g$ with maximal Heisenberg symmetry on $\mathbb{R}^4$ is $\mathrm{O}(2)$-invariant, that is not only 
$\mathrm{SO}(2)$-invariant but, in addition,  invariant under the involution $\sigma : (t,x,y,z)\mapsto (t,y,x,-z)$. The 
surface 
\begin{equation} \Sigma = (\mathbb{R}^4)^\sigma = \{ p\in \mathbb{R}^4 \mid \sigma (p) =p\} = \{ (t,x,x,0)\mid t,x \in \mathbb{R}\}\end{equation} 
is totally geodesic and induces an $H$-invariant foliation of $\mathbb{R}^4$ by totally geodesic surfaces. 
The leaf through a point $p_0=(t_0,x_0,y_0,z_0)$ is given by 
\begin{equation}\Sigma_{p_0}= (x_0,y_0,z_0)\cdot \Sigma = \{  (t,x,y,z)\mid x-y=x_0-y_0,\quad z = (y_0-x_0)(x-x_0)+z_0\}.\end{equation}
\end{cor}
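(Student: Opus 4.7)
The plan is to first verify that $\sigma$ is an isometry of $g$, so that together with the already established $\mathrm{SO}(2)$-invariance it extends the symmetry to the full $\mathrm{O}(2)$-action; then invoke the standard principle that the fixed-point set of an isometric involution is totally geodesic; and finally transport this totally geodesic surface by the isometric $H$-action to obtain the foliation with the asserted explicit leaves.

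For the $\mathrm{O}(2)$-invariance, I would note that $\sigma$ corresponds to Proposition \ref{aut:prop} applied with the reflection matrix $A = \left(\begin{smallmatrix} 0 & 1 \\ 1 & 0 \end{smallmatrix}\right) \in \mathrm{O}(2)$, since $(v,z)\mapsto (Av, z\det A) = ((y,x),-z)$ is an automorphism of $H$ lying outside $\mathrm{SO}(2)\subset\mathrm{Aut}(H)$. Computing the pullbacks of the defining one-forms in (\ref{gt:eq}) gives $\sigma^* dt = dt$, $\sigma^*(dx^2+dy^2)=dx^2+dy^2$, and $\sigma^*(dz+xdy-ydx)=-(dz+xdy-ydx)$, so $\sigma^* g = g$ follows immediately. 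Combined with Proposition \ref{SO2H:prop}, this yields $\mathrm{O}(2)$-invariance.

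For the total geodesicity of $\Sigma$, the standard argument is that for $p\in\Sigma$ and $v\in T_p\Sigma$, the geodesic $\gamma$ through $p$ with initial velocity $v$ satisfies $\sigma\circ\gamma = \gamma$ by uniqueness of geodesics (as $\sigma$ is an isometry fixing $p$ with $d\sigma_p(v)=v$), so $\gamma$ remains in $\Sigma$. Solving $\sigma(p)=p$ recovers $\Sigma=\{(t,x,x,0)\}$ directly. For the foliation, since $H$ acts by isometries of $g$, each left translate $\Sigma_{p_0}=(x_0,y_0,z_0)\cdot\Sigma$ is automatically totally geodesic. A direct computation of $(x_0,y_0,z_0)\cdot(t,a,a,0)$ using the Heisenberg product (\ref{heis:eq}) yields $(t,a+x_0,a+y_0,z_0+(y_0-x_0)a)$, and eliminating the parameter $a$ recovers precisely the two equations in the statement. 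These exhibit the leaves as common level sets of the smooth functions $f_1 = x-y$ and $f_2 = z+(x-y)x$, whose differentials $df_1 = dx-dy$ and $df_2 = dz+(2x-y)dx-x\,dy$ are linearly independent at every point of $\mathbb{R}^4$; hence the $\Sigma_{p_0}$ form a regular foliation.

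The steps are essentially routine: no genuine obstacle presents itself, and the real work lies in carefully tracking the Heisenberg product and the one-form pullbacks so that the coordinate description of $\Sigma_{p_0}$ in the statement is reproduced exactly.
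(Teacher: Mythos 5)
Your proof is correct, and since the paper states this corollary without proof (as an evident consequence of Proposition \ref{SO2H:prop} and standard facts), your argument -- checking $\sigma^*g=g$ via Proposition \ref{aut:prop} and the one-form pullbacks, invoking the fixed-point-set-of-an-isometric-involution principle for total geodesicity, and translating $\Sigma$ by the isometric $H$-action with the Heisenberg product to get the explicit leaves -- is exactly the intended routine verification. No discrepancies with the paper's claims; the coordinate description of $\Sigma_{p_0}$ you derive matches the statement.
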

\section{Einstein equation for metrics with maximal Heisenberg symmetry}
In this section we determine the system of ordinary differential equations 
satisfied by Einstein metrics with maximal Heisenberg symmetry. 
First we compute the Levi-Civita connection and Ricci curvature of such metrics.  

Throughout this section 
\begin{equation}\label{metric:eq}g=dt^2 + a(t)(dz+xdy-ydx)^2 + b(t)(dx^2+dy^2)\end{equation}
denotes a metric with maximal Heisenberg symmetry on $\mathbb{R}^4$.  
\subsection{Connection and Ricci curvature}
\begin{prop} The Levi-Civita connection $\nabla$ of a metric (\ref{metric:eq})  with maximal Heisenberg symmetry is given by 
\begin{align*} 
\nabla_{\partial_t}\partial_t&=0,\quad \nabla_{\partial_t}\partial_z= \frac12 (\ln a)'\partial_z,\quad 
\nabla_{\partial_z}\partial_z = -\frac12 a'\partial_t,\\
\nabla_{\partial_t}\partial_x &= \frac{y}{2} \left(\ln \frac{b}{a}\right)'\partial_z +\frac12 (\ln b)'\partial_x,\quad 
\nabla_{\partial_t}\partial_y=\frac{x}{2} \left(\ln \frac{a}{b}\right)'\partial_z +\frac12 (\ln b)'\partial_y,\\
\nabla_{\partial_z}\partial_x &= \frac12 a'y\partial_t-\frac{a}{b}x\partial_z+\frac{a}{b}\partial_y,\quad \nabla_{\partial_z}\partial_y=-\frac12 a'x\partial_t-\frac{a}{b}y\partial_z-\frac{a}{b}\partial_x,\\
\nabla_{\partial_x}\partial_x&= -\frac12 (a'y^2+b')\partial_t +2 \frac{a}{b}xy\partial_z-2\frac{a}{b}y\partial_y,\\
\nabla_{\partial_y}\partial_y&= -\frac12 (a'x^2+b')\partial_t -2 \frac{a}{b}xy\partial_z-2\frac{a}{b}x\partial_x,\\
\nabla_{\partial_x}\partial_y &= \frac12 a'xy\partial_t +\frac{a}{b}(y^2-x^2)\partial_z +\frac{a}{b}y\partial_x +
\frac{a}{b}x\partial_y.
\end{align*}
\end{prop}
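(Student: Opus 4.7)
The plan is to compute the Levi-Civita connection in the left-invariant orthogonal frame $(e_0, e_1, e_2, e_3) := (\partial_t, \partial_x + y \partial_z, \partial_y - x \partial_z, \partial_z)$ introduced in (\ref{frame:eq}) and then translate the result back to the coordinate basis. This frame is convenient because the metric (\ref{metric:eq}) is diagonal in it, with $g(e_0, e_0) = 1$, $g(e_1, e_1) = g(e_2, e_2) = b(t)$ and $g(e_3, e_3) = a(t)$. Moreover, the fields $e_1, e_2, e_3$ are time-independent so they commute with $e_0 = \partial_t$, and together with the Heisenberg relation $[e_1, e_2] = -2 e_3$ recalled after (\ref{frame:eq}) these exhaust the nontrivial Lie brackets.

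Given this setup, the Koszul formula reduces to only a handful of terms. The only nonvanishing derivatives of metric coefficients along the frame are $e_0 g_{11} = e_0 g_{22} = b'(t)$ and $e_0 g_{33} = a'(t)$, while the bracket contributions are all controlled by the single structure constant $[e_1, e_2] = -2 e_3$. A systematic evaluation of $2 g(\nabla_{e_i} e_j, e_k)$ for each triple thus yields $\nabla_{e_i} e_j$ for all $i,j$. The $\mathrm{SO}(2)$-rotation in the $(e_1, e_2)$-plane, under which $e_0$ and $e_3$ are fixed, roughly halves the work by relating formulas involving $e_1$ to those involving $e_2$.

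Finally I translate these frame expressions to the coordinate basis using $\partial_t = e_0$, $\partial_x = e_1 - y e_3$, $\partial_y = e_2 + x e_3$, $\partial_z = e_3$ and the Leibniz rule $\nabla_X (f Y) = X(f) Y + f \nabla_X Y$, moving the $x$- and $y$-coefficients across the connection as needed. This is the origin of the polynomial $x,y$ factors appearing in the stated formulas; for instance $\nabla_{\partial_x} \partial_y = \nabla_{\partial_x} e_2 + (\partial_x x) e_3 + x \nabla_{\partial_x} e_3$ gains an explicit $e_3 = \partial_z$ term. As a consistency check I would invoke the isometry $\sigma : (t, x, y, z) \mapsto (t, y, x, -z)$ from the preceding corollary, which must exchange the $\partial_x$- and $\partial_y$-equations while reversing the sign of any $\partial_z$-contribution. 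The only real obstacle is bookkeeping: the calculation is mechanical but demands care with signs and with the Leibniz corrections arising from passing between the left-invariant and coordinate frames.
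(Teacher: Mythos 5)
Your approach is correct, and since the paper states this proposition without any proof (it is left as a direct computation), there is nothing in the source to compare it against; your plan is a perfectly good — arguably the cleanest — way to carry it out. The key facts you rely on all check out: in the frame $(e_0,e_1,e_2,e_3)=(\partial_t,\partial_x+y\partial_z,\partial_y-x\partial_z,\partial_z)$ the metric is diagonal with coefficients $1,b,b,a$, the only nonzero bracket among frame fields is $[e_1,e_2]=-2e_3$, and the Koszul formula then gives, e.g., $\nabla_{e_0}e_1=\tfrac12(\ln b)'e_1$, $\nabla_{e_0}e_3=\tfrac12(\ln a)'e_3$, $\nabla_{e_3}e_3=-\tfrac12 a'e_0$, $\nabla_{e_1}e_3=\nabla_{e_3}e_1=\tfrac{a}{b}e_2$, $\nabla_{e_2}e_3=-\tfrac{a}{b}e_1$, $\nabla_{e_1}e_2=-e_3$, $\nabla_{e_2}e_1=e_3$, $\nabla_{e_1}e_1=\nabla_{e_2}e_2=-\tfrac12 b'e_0$. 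Substituting $\partial_x=e_1-ye_3$, $\partial_y=e_2+xe_3$ and applying the Leibniz rule exactly as you describe reproduces every formula in the statement (I spot-checked $\nabla_{\partial_t}\partial_x$, $\nabla_{\partial_z}\partial_x$, $\nabla_{\partial_x}\partial_x$, $\nabla_{\partial_y}\partial_y$ and $\nabla_{\partial_x}\partial_y$, including the polynomial $x,y$ factors), so the only remaining work is the bookkeeping you already flagged.
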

\begin{prop} The Ricci curvature $\mathrm{Ric}_g = \sum R_{ij}dx^idx^j$, of $g$ is given in the coordinates $(x^0,x^1,x^2,x^3) = (t,x,y,z)$ by
\begin{align*}
R_{00} &= -\left( \frac{1}{4}((\ln a)')^2 +\frac12((\ln b)')^2 +\frac12(\ln a)'' + (\ln b)''\right) g_{00},\\
R_{11} &= -\frac12 (a''y^2+b'') -\frac14 (\ln a)'b' + 2 \frac{a^2}{b^2}y^2  +\frac{y^2}{4}(\ln a)'a' -\frac12 a'y^2(\ln b)'-2\frac{a}{b},\\
R_{22} &= -\frac12 (a''x^2+b'') -\frac14(\ln a)'b' + 2\frac{a^2}{b^2}x^2+\frac{x^2}{4} (\ln a)'a'  -\frac12 a'x^2(\ln b)' 
-2\frac{a}{b},\\
R_{33}&= \left( -\frac{a''}{2a} +\frac{(a')^2}{4a^2} - \frac{a'b'}{2ab}+2\frac{a}{b^2}\right)g_{33},\\
R_{12} &= \frac12 a''xy -\frac{(a')^2}{4a}xy -2\left(\frac{a}{b}\right)^2xy +\frac12 a'(\ln b)'xy,\\
R_{13} &= \frac12 a''y +\frac{y}{4}a'\left( \ln \frac{b}{a}\right)' -2 \left( \frac{a}{b}\right)^2y +\frac14 a'(\ln b)'y,\\
R_{23} &= -\left( \frac12 a''x +\frac{x}{4}a'\left( \ln \frac{b}{a}\right)' -2 \left( \frac{a}{b}\right)^2x +\frac14 a'(\ln b)'x\right),\\
R_{01} &= 0 =g_{01},\quad R_{02} =0=g_{02},\quad R_{03} =0=g_{03}. 
\end{align*}
\end{prop}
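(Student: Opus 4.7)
My plan is to exploit the $(\mathrm{SO}(2)\ltimes H)$-invariance of $\mathrm{Ric}_g$ by computing first in the left-invariant parallelisation $(\partial_t,e_1,e_2,e_3)$ of \eqref{frame:eq} and only afterwards transferring to the coordinate coframe. In this frame the components of $\mathrm{Ric}_g$ must depend only on $t$; the residual $\mathrm{SO}(2)$-action rotates $(e_1,e_2)$ orthogonally and fixes $\partial_t$ and $e_3$, so invariance further forces the matrix $(\mathrm{Ric}_g(e_\alpha,e_\beta))$ to be diagonal with $\mathrm{Ric}_g(e_1,e_1)=\mathrm{Ric}_g(e_2,e_2)$. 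Consequently the whole tensor is controlled by three scalar functions of $t$ alone,
\[ r_0:=\mathrm{Ric}_g(\partial_t,\partial_t),\qquad r_1:=\mathrm{Ric}_g(e_1,e_1),\qquad r_3:=\mathrm{Ric}_g(e_3,e_3). \]

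To evaluate them I would apply $\mathrm{Ric}_g(X,X)=\sum_\alpha g(R(X,f_\alpha)f^\alpha,X)$ to $X=\partial_t,e_1,e_3$ and restrict attention, by left-$H$-invariance, to the slice $\{x=y=0\}$, where the $e_i$ coincide with $\partial_x,\partial_y,\partial_z$ and many Christoffel symbols from the previous proposition collapse sharply: for instance $\nabla_{\partial_x}\partial_x=-\tfrac12 b'\,\partial_t$, $\nabla_{\partial_x}\partial_y=0$, $\nabla_{\partial_z}\partial_x=(a/b)\,\partial_y$, $\nabla_{\partial_t}\partial_x=\tfrac12(\ln b)'\,\partial_x$. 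The resulting Riemann-tensor contraction is a direct Bianchi-type computation which should produce
\begin{align*}
 r_0 &= -\tfrac12(\ln a)''-(\ln b)''-\tfrac14((\ln a)')^2-\tfrac12((\ln b)')^2,\\
 r_1 &= -\tfrac12 b''-\tfrac14(\ln a)'\,b'-2\,\tfrac{a}{b},\\
 r_3 &= -\tfrac12 a''+\tfrac{(a')^2}{4a}-\tfrac{a'b'}{2b}+2\,\tfrac{a^2}{b^2},
\end{align*}
matching the scalars multiplying $g_{00}$ in $R_{00}$ and $g_{33}$ in $R_{33}$ as stated.

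The passage back to the coordinate basis is then purely algebraic: substituting the inversions $\partial_x=e_1-y\,e_3$, $\partial_y=e_2+x\,e_3$, $\partial_z=e_3$ bilinearly into $\mathrm{Ric}_g$ yields at once
\[ R_{11}=r_1+y^2 r_3,\quad R_{22}=r_1+x^2 r_3,\quad R_{12}=-xy\,r_3,\quad R_{13}=-y\,r_3,\quad R_{23}=x\,r_3,\quad R_{0i}=0, \]
and the stated expressions are recovered by collecting the common factor $r_3$ out of the $x,y$-polynomials in each $R_{ij}$. The main obstacle is not conceptual but combinatorial: the Christoffel symbols from the previous proposition are numerous and carry many signs and $\ln$-derivatives, so the curvature computation at $\{x=y=0\}$ requires careful bookkeeping. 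A useful internal check is that the final $R_{ij}$ respect the involution $\sigma:(t,x,y,z)\mapsto(t,y,x,-z)$ of the preceding corollary, which acts as $e_1\leftrightarrow e_2$, $e_3\mapsto -e_3$ on the left-invariant frame.
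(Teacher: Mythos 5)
Your overall strategy is sound and is in fact the one the paper itself hints at (the remark before the corollary says that $\mathrm{SO}(2)\ltimes H$-invariance is what is used in the curvature computation; no written proof is given beyond the direct computation from the Christoffel symbols). Your three scalars $r_0,r_1,r_3$ and the reconstruction $R_{11}=r_1+y^2r_3$, $R_{22}=r_1+x^2r_3$, $R_{12}=-xy\,r_3$, $R_{13}=-y\,r_3$, $R_{23}=x\,r_3$ do reproduce exactly the stated components (using $g_{33}=a$, $(\ln a)'a'=(a')^2/a$, $a'(\ln b)'=a'b'/b$). However, there are two gaps in the argument as written. First, $\mathrm{SO}(2)$-invariance alone does \emph{not} force the frame matrix of $\mathrm{Ric}_g$ to be diagonal: both $\partial_t$ and $e_3$ are fixed by the rotations, so the entry $\mathrm{Ric}_g(\partial_t,e_3)$ survives all $\mathrm{SO}(2)$-considerations, and it is precisely this entry that you need to vanish in order to get $R_{03}=0$ and, via $\partial_x=e_1-y\,e_3$, $\partial_y=e_2+x\,e_3$, also $R_{01}=R_{02}=0$ off the slice. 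You must either compute this entry directly or invoke the involution $\sigma:(t,x,y,z)\mapsto(t,y,x,-z)$ of the corollary (an isometry for any metric with maximal Heisenberg symmetry), which acts on the frame by $e_1\leftrightarrow e_2$, $e_3\mapsto-e_3$ and hence kills $\mathrm{Ric}_g(\partial_t,e_3)$; you mention $\sigma$ only as a consistency check, but it is actually needed here.

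Second, restricting the Christoffel symbols to the slice $\{x=y=0\}$ is not by itself enough to carry out the curvature contraction, because $R(X,Y)Z=\nabla_X\nabla_YZ-\nabla_Y\nabla_XZ-\nabla_{[X,Y]}Z$ involves $x$- and $y$-derivatives of the connection coefficients, which do not vanish on the slice even when the coefficients themselves do. For example, in $\nabla_{\partial_x}\nabla_{\partial_z}\partial_x$ the term $\nabla_{\partial_x}\bigl(-\tfrac{a}{b}x\,\partial_z\bigr)$ contributes $-\tfrac{a}{b}\partial_z$ at $x=y=0$, although the coefficient $-\tfrac{a}{b}x$ itself vanishes there; dropping such contributions gives wrong values for $r_1$ and $r_3$. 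To realize your plan cleanly you should either keep the full coordinate expressions and evaluate at $x=y=0$ only at the very end, or (better suited to your invariance argument) compute the connection directly in the invariant frame, i.e.\ the quantities $\nabla_{\partial_t}e_i$ and $\nabla_{e_i}e_j$, whose coefficients depend on $t$ alone; then the curvature in the frame requires only $t$-derivatives and the bracket $[e_1,e_2]=-2e_3$ from (\ref{frame:eq}), and the passage back to coordinates is the purely algebraic substitution you describe. With these two repairs the proof goes through and agrees with the proposition.
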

\subsection{Einstein equation}
\begin{cor} A metric (\ref{metric:eq})  with maximal Heisenberg symmetry is Einstein, $\mathrm{Ric}_g = \Lambda g$, with constant $\Lambda$ if and only if 
\begin{equation*}
\begin{split}
 &\frac{1}{4}((\ln a)')^2 +\frac12((\ln b)')^2 +\frac12(\ln a)'' + (\ln b)'' = \frac{a''}{2a} -\frac{(a')^2}{4a^2} + \frac{a'b'}{2ab}-\frac{2a}{b^2} =-\Lambda,\\
 &\frac12 \frac{b''}{b} +\frac14 (\ln a)'(\ln b)'    +\frac{2a}{b^2} =
 -\Lambda .
\end{split}
\end{equation*}
\end{cor}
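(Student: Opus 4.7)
The plan is to substitute the components of the Ricci tensor given by the preceding proposition into the equation $R_{ij}=\Lambda g_{ij}$, match them against the components of $g$ read off from the defining formula (\ref{metric:eq}), and then extract the independent conditions on $a$ and $b$.

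First I would record the metric components explicitly, namely $g_{00}=1$, $g_{03}=g_{01}=g_{02}=0$, $g_{33}=a$, $g_{13}=-ay$, $g_{23}=ax$, $g_{12}=-axy$, $g_{11}=ay^2+b$, $g_{22}=ax^2+b$, obtained by expanding the square $(dz+xdy-ydx)^2$. The conditions $R_{0i}=0=g_{0i}$ are then automatic for $i=1,2,3$.

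Next I would handle the two ``pure'' equations. The equation $R_{00}=\Lambda g_{00}$ produces, after dividing by $g_{00}=1$, the first equation of the corollary. The equation $R_{33}=\Lambda g_{33}$, after dividing by $g_{33}=a$, rearranges to the middle equation asserting $\tfrac{a''}{2a}-\tfrac{(a')^2}{4a^2}+\tfrac{a'b'}{2ab}-\tfrac{2a}{b^2}=-\Lambda$. The third equation will then emerge from the ``constant in $x,y$'' part of $R_{11}=\Lambda g_{11}$: splitting the latter into a polynomial part proportional to $y^2$ and a part independent of $y$, the constant part reads $-\tfrac12 b''-\tfrac14(\ln a)'b'-2\tfrac{a}{b}=\Lambda b$, which upon division by $b$ gives exactly the third equation.

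Finally I would verify that no further conditions are imposed. The coefficient of $y^2$ in the equation $R_{11}=\Lambda g_{11}$, dictated by $\Lambda$ times the coefficient $a$ of $y^2$ in $g_{11}$, rearranges (using $(\ln a)'a'=(a')^2/a$) to exactly the $R_{33}$-equation; the same computation with $x$ and $y$ exchanged takes care of $R_{22}=\Lambda g_{22}$. For the off-diagonal entries, $R_{12}$, $R_{13}$, $R_{23}$ are each a monomial in $x,y$ times a bracket that, after matching with $\Lambda g_{12}$, $\Lambda g_{13}$, $\Lambda g_{23}$ respectively and dividing by $a$, collapses to the $R_{33}$-equation. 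So no new conditions arise.

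The reasoning is entirely algebraic once the proposition on Ricci components is granted; the only mildly delicate point is the recognition that every off-diagonal Ricci equation and the $y^2$-part of $R_{11}=\Lambda g_{11}$ are redundant with the $R_{33}$ equation, so that the Einstein system truly reduces to the three displayed ODEs rather than an overdetermined system.
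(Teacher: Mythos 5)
Your proposal is correct and follows exactly the computation the paper leaves implicit: the corollary is obtained by substituting the Ricci components of the preceding proposition into $\mathrm{Ric}_g=\Lambda g$, and your verification that the $R_{22}$ equation, the off-diagonal equations, and the $y^2$-part of the $R_{11}$ equation all reduce to the $R_{33}$ equation is precisely the redundancy check needed to see that only the three displayed equations remain.
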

\begin{cor} The metric $g$ is Einstein with constant $\Lambda$ if and only if 
the functions $\lambda = (\ln a)'$ and $\mu = (\ln b)'$ satisfy the following overdetermined system of 
ordinary differential equations: 
\begin{align}
 \label{first:eq}2\lambda' + 4\mu' +\lambda^2 +2\mu^2  +4\Lambda &= 0,\\
 \label{second:eq}2\lambda'+\lambda^2+2\lambda\mu -\frac{8a}{b^2}+4\Lambda &= 0,\\
 \label{third:eq}2\mu'+2\mu^2+\lambda\mu +\frac{8a}{b^2}+4\Lambda &= 0.
 \end{align}
 The system is equivalent to 
\begin{align}
\label{firstprime:eq}2\lambda'&= -(\lambda^2+2\mu^2+6\lambda\mu +12 \Lambda ) ,\\
\label{secondprime:eq}2\mu' &= 3\lambda \mu +4 \Lambda, \\
\label{thirdprime:eq}0 &= \mu^2 +2\lambda\mu +\frac{4a}{b^2}+4 \Lambda.
 \end{align}
\end{cor}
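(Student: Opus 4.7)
The plan is to verify both equivalences by direct algebraic computation, since all geometric content has already been extracted in the preceding corollary. No further geometric input is required.

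For the first equivalence, I would substitute $\lambda = (\ln a)' = a'/a$ and $\mu = (\ln b)' = b'/b$ into the three equations of the preceding corollary, using the identities $(\ln a)'' = \lambda'$, $a''/a = \lambda' + \lambda^2$, $(a')^2/a^2 = \lambda^2$, $a'b'/(ab) = \lambda\mu$, together with the analogous ones for $b$. After multiplying each equation through by $4$ to clear the quarter- and half-coefficients, the three equations of the preceding corollary become (\ref{first:eq}), (\ref{second:eq}), (\ref{third:eq}) respectively.

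For the second equivalence, I would pass between the systems (\ref{first:eq})--(\ref{third:eq}) and (\ref{firstprime:eq})--(\ref{thirdprime:eq}) by elementary linear combinations. First, computing (\ref{second:eq}) $+$ (\ref{third:eq}) $-$ (\ref{first:eq}) causes the $2\lambda'$, $\lambda^2$, $2\mu^2$ and $\pm 8a/b^2$ contributions to cancel, leaving $-2\mu' + 3\lambda\mu + 4\Lambda = 0$, which is (\ref{secondprime:eq}). Next, substituting this expression for $2\mu'$ back into (\ref{third:eq}) and dividing by $2$ would produce (\ref{thirdprime:eq}). Finally, using (\ref{thirdprime:eq}) to eliminate the $8a/b^2$ term in (\ref{second:eq}) would yield (\ref{firstprime:eq}). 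The converse direction follows by reversing these three invertible linear operations: (\ref{third:eq}) is recovered from (\ref{secondprime:eq}) and (\ref{thirdprime:eq}), (\ref{second:eq}) from (\ref{firstprime:eq}) and (\ref{thirdprime:eq}), and (\ref{first:eq}) from (\ref{secondprime:eq}) together with the sum (\ref{second:eq})$+$(\ref{third:eq}).

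The computation is entirely routine; the main obstacle will be careful bookkeeping of coefficients and signs, particularly the factor of $4$ introduced when clearing fractions in Step 1 and the cancellation pattern of the $a/b^2$ terms in Step 2.
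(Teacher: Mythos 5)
Your proposal is correct and follows essentially the same route as the paper: the first system is obtained by substituting $\lambda=(\ln a)'$, $\mu=(\ln b)'$ into the previous corollary and clearing denominators, and the passage to (\ref{firstprime:eq})--(\ref{thirdprime:eq}) is by the same kind of elementary linear combinations (the paper uses the sum of (\ref{second:eq}) and (\ref{third:eq}) to eliminate $\mu'$ and $\lambda'$ from (\ref{first:eq}), which is only trivially different from your combination (\ref{second:eq})$+$(\ref{third:eq})$-$(\ref{first:eq}) and subsequent substitutions). Your explicit verification of the reverse direction is a harmless addition that the paper leaves implicit in the word ``equivalent''.
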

\begin{proof} 
The first system is obtained by substitution of the variables. 
Adding the equations (\ref{second:eq}) and (\ref{third:eq}) we obtain
\[
2\lambda' +2\mu' +\lambda^2+2\mu^2+3\lambda\mu +8\Lambda =0.
\]
Using this equation we eliminate respectively $\mu'$ and $\lambda'$ from (\ref{first:eq}) 
arriving at (\ref{firstprime:eq}) and (\ref{secondprime:eq}). 
Finally, comparing (\ref{third:eq}) with (\ref{secondprime:eq}) yields (\ref{thirdprime:eq}). 
\end{proof}
\section{Solutions}
\subsection{Classification of stationary solutions}
\label{stat:sec}
We call a solution $(a(t),b(t))$ of the ode system (\ref{firstprime:eq})-(\ref{thirdprime:eq}) \emph{stationary} 
if $\lambda'=\mu'=0$. 
\begin{prop} \label{stat:prop}The stationary solutions $(a,b)$ of the Einstein equations (\ref{firstprime:eq})-(\ref{thirdprime:eq})
are given by 
\begin{equation} \label{stat:eq}a = -\frac{\Lambda}{6} b^2,\quad b= Ce^{\mu t},\end{equation}
where $\mu \neq 0$ and $C>0$ are constants. 
The corresponding Einstein manifold $(\mathbb{R}^4,g)$ is isometric to the complex hyperbolic plane 
of Einstein constant $\Lambda = -\frac32 \mu^2<0$.
\end{prop}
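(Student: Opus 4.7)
The plan is to first reduce the stationarity condition $\lambda' = \mu' = 0$ to a set of algebraic constraints on $\lambda$, $\mu$, $a/b^2$, and $\Lambda$, extract the explicit one-parameter family (\ref{stat:eq}), and then recognize the resulting metric as the standard solvable-group model of the complex hyperbolic plane.

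For the first part, stationarity implies $a(t) = a_0 e^{\lambda t}$ and $b(t) = b_0 e^{\mu t}$ for some constants $a_0, b_0 > 0$. Equations (\ref{firstprime:eq}) and (\ref{secondprime:eq}) reduce to the algebraic relations
\[ \lambda^2 + 2\mu^2 + 6\lambda\mu + 12\Lambda = 0, \qquad 3\lambda\mu + 4\Lambda = 0. \]
The constraint (\ref{thirdprime:eq}) requires $a/b^2 = (a_0/b_0^2)\, e^{(\lambda - 2\mu)t}$ to be constant in $t$, which forces $\lambda = 2\mu$. Substituting this into either of the first two relations gives $\Lambda = -\frac{3}{2}\mu^2$. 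The degenerate case $\mu = 0$ would force $\Lambda = 0$, and then (\ref{thirdprime:eq}) would demand $a/b^2 = 0$, violating positivity of $a$; hence $\mu \neq 0$ and $\Lambda < 0$. Finally (\ref{thirdprime:eq}) fixes $a/b^2 = \mu^2/4 = -\Lambda/6$, and setting $C := b_0$ produces (\ref{stat:eq}).

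For the identification with the complex hyperbolic plane, I would invoke the Iwasawa decomposition $\mathrm{SU}(2,1) = KAN$: the solvable factor $AN$ has $N = H$ and $A = \mathbb{R}$ acting on $H$ by the grading automorphism $(x,y,z) \mapsto (e^{s/2}x, e^{s/2}y, e^s z)$, which respects the $\mathrm{SO}(2)$ symmetry in the $(x,y)$-plane. Endowing $AN$ with the corresponding left-invariant metric produces precisely a metric of the form (\ref{stat:eq}) after a translation of $t$ absorbing $C$, and realizes the canonical complex hyperbolic metric on $\CH^2$ with Einstein constant $\Lambda = -\frac{3}{2}\mu^2$. The main obstacle is making this geometric identification rigorous; one could alternatively verify it by computing the full Riemann tensor of the candidate metric and checking that the sectional curvatures satisfy the $\tfrac{1}{4}$-pinching characteristic of $\CH^2$, but the Iwasawa viewpoint is conceptually cleaner and bypasses the lengthy curvature computation.
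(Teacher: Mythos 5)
Your proposal is correct and follows essentially the same route as the paper: the same algebraic reduction (constancy of $a/b^2$ from (\ref{thirdprime:eq}) forcing $\lambda=2\mu$, then $\Lambda=-\tfrac32\mu^2$ from (\ref{firstprime:eq})--(\ref{secondprime:eq}) and $a/b^2=-\Lambda/6>0$, whence $\mu\neq 0$), followed by the same identification of the resulting metric with the complex hyperbolic plane as a left-invariant metric on the solvable Iwasawa subgroup. The only (cosmetic) difference is that the paper first normalizes the solution by the homothety of Remark \ref{homoth:rem} before invoking the Iwasawa model, while you phrase the identification directly in terms of the grading automorphism; both arguments leave the final recognition of the symmetric metric at the same level of appeal to known facts.
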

\begin{proof} Since $\lambda$ and $\mu$ are constant for stationary solutions, we see from 
(\ref{thirdprime:eq}) that the function $a/b^2$ is constant and, hence, 
\[ \lambda -2\mu =0.\]
Inserting this into (\ref{firstprime:eq})-(\ref{secondprime:eq}) we obtain
\[ \Lambda = -\frac32 \mu^2\]
and (\ref{thirdprime:eq}) then yields 
\[ \Lambda = -\frac{6a}{b^2}.\]
This shows that $\Lambda <0$ and, hence, $\mu\neq0$.
The above metrics are all homothetic to 
\begin{equation}\label{cxhyp:eq}dt^2+ e^{4t}(dz+xdy-ydx)^2 + e^{2t}(dx^2+dy^2),\end{equation}
by Remark \ref{homoth:rem} below. This is the complex hyperbolic metric of holomorphic sectional curvature $-4$ (i.e.\ $\Lambda =-6$) written 
as a left-invariant metric on the simply transitive solvable Iwasawa subgroup of its group of holomorphic isometries $\mathrm{PSU}(1,2)$. 
\end{proof}
\begin{rem}\label{homoth:rem} 
The two parameters of the solution (\ref{stat:eq}) correspond to the freedom to re-parametrize 
the $t$-variable by an affine transformation. In fact, a transformation of the coordinates 
$(t,x,y,z)$ by a pure translation in $t$ yields another  stationary 
solution but with another $C$-parameter, 
whereas rescaling of the $t$-variable in the coordinate system yields an  
Einstein metric which, up to a constant conformal factor, is a stationary solution in our class (\ref{metric:eq}), 
the latter with another $\mu$-parameter. 

Note that the transformation $(a,b)\mapsto (\tilde{a},\tilde{b})$, where  
\begin{equation} \label{rescale:eq} \tilde{a}(t):= \frac{a(kt)}{k^2},\quad  \tilde{b}(t):= \frac{b(kt)}{k^2},\quad 
 k\in \mathbb{R}\setminus \{0 \}, 
\end{equation}
maps arbitrary solutions of (\ref{firstprime:eq})-(\ref{thirdprime:eq}) to (homothetic) solutions. In this 
way one can always normalize a given solution with $\Lambda < 0$ such that $\Lambda=-6$. 
\end{rem}
\subsection{Ricci-flat solutions}
\label{Ricci-flat:sec}
\begin{prop} \label{HK:prop}There exist solutions $(a,b)$ of the Einstein equations (\ref{firstprime:eq})-(\ref{thirdprime:eq})
with $\lambda = \frac{\ell}{t}$ and $\mu=\frac{m}{t}$, $\ell , m\in \mathbb{R}$. They are all hyper-K\"ahler and of the form 
\begin{equation}\label{eq:ab-hk}
a(t)=a_1|t|^{-2/3},\quad b(t) = b_1|t|^{2/3}, 
\end{equation}
where $a_1, b_1$ are positive constants such that $a_1/b_1^2=1/9$. The maximal domains  
of definition of these (incomplete) metrics are $\mathbb{R}_{>0}\times \mathbb{R}^3$ and 
$\mathbb{R}_{<0}\times \mathbb{R}^3$.  
\end{prop}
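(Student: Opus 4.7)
Setting $\Lambda = 0$ and substituting the ansatz $\lambda = \ell/t$, $\mu = m/t$ into (\ref{firstprime:eq})--(\ref{thirdprime:eq}) reduces the first two equations to algebraic conditions on $(\ell, m)$, since every term is proportional to $1/t^2$. Equation (\ref{secondprime:eq}) becomes $m(3\ell + 2) = 0$; the branch $m = 0$ forces $b$ to be constant, and then (\ref{thirdprime:eq}) gives $4a/b^2 = 0$, contradicting $a > 0$. Hence $\ell = -2/3$, and (\ref{firstprime:eq}) collapses to the quadratic $m^2 - 2m + 8/9 = 0$, with roots $m = 2/3$ and $m = 4/3$.

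To pick out the admissible root I would use (\ref{thirdprime:eq}): the term $4a/b^2$ equals $(4a_1/b_1^2)|t|^{-2/3-2m}$, which has to combine with $\mu^2 + 2\lambda\mu = (m^2 + 2\ell m)/t^2$ to give zero. Matching $t$-exponents forces $m = 2/3$; for $m = 4/3$, the first two terms cancel identically, so the third would force $a_1 = 0$. With $m = 2/3$, $m^2 + 2\ell m = -4/9$, and (\ref{thirdprime:eq}) reduces to $a_1/b_1^2 = 1/9$. Integrating the logarithmic derivatives $\lambda = -2/(3t)$ and $\mu = 2/(3t)$ recovers (\ref{eq:ab-hk}).

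For the hyper-K\"ahler claim I would verify closure of a self-dual triple in the orthonormal coframe $\theta^0 = dt$, $\theta^1 = \sqrt{b}\,dx$, $\theta^2 = \sqrt{b}\,dy$, $\theta^3 = \sqrt{a}\,(dz + xdy - ydx)$, namely
\[
\omega_3 = \theta^0 \wedge \theta^3 + \theta^1 \wedge \theta^2, \qquad \omega_1 + i\omega_2 = (\theta^0 + i\theta^3) \wedge (\theta^1 + i\theta^2).
\]
A short calculation using $d(dz + xdy - ydx) = 2\,dx \wedge dy$ shows that $d\omega_3 = 0$ is equivalent to $b' = 2\sqrt{a}$ and $d(\omega_1 + i\omega_2) = 0$ is equivalent to $(ab)' = 0$. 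Both conditions follow from (\ref{eq:ab-hk}) together with $a_1/b_1^2 = 1/9$, after flipping the sign of $\omega_3$ on the component $t < 0$ to account for orientation. The algebraic relations $\omega_i \wedge \omega_j = 2\delta_{ij}\,\mathrm{vol}_g$ hold automatically for this orthonormal triple, so closure upgrades it to a hyper-K\"ahler structure.

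Finally, the functions in (\ref{eq:ab-hk}) are smooth and positive on each of $\mathbb{R}_{>0}$ and $\mathbb{R}_{<0}$, but $a \to \infty$ and $b \to 0$ as $t \to 0$, so no smooth extension across $t = 0$ exists. The normal geodesic $t \mapsto (t, x, y, z)$ has unit speed and reaches $t = 0$ in finite arc length, proving incompleteness on each component. I expect the main technical hurdle to be the hyper-K\"ahler verification --- especially identifying the correct triple and handling the orientation reversal between $t > 0$ and $t < 0$ --- while the rest of the argument is a short exercise in algebra and integration.
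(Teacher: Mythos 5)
Your algebraic determination of the exponents is correct and matches the paper's route: with the ansatz, (\ref{secondprime:eq}) gives $m(3\ell+2)=0$, the branch $m=0$ is killed by (\ref{thirdprime:eq}), and for $\ell=-\tfrac23$ equation (\ref{firstprime:eq}) yields $m\in\{\tfrac23,\tfrac43\}$, with $m=\tfrac43$ excluded and $a_1/b_1^2=\tfrac19$ forced exactly as you say. One small omission: you \emph{set} $\Lambda=0$ at the outset, whereas the proposition quantifies over all Einstein constants, so $\Lambda=0$ must be \emph{deduced}; this is immediate from (\ref{secondprime:eq}), since $2\mu'-3\lambda\mu$ is a multiple of $t^{-2}$ and can equal the constant $-4\Lambda$ on an interval only if $\Lambda=0$, but the line should be there. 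Your incompleteness argument (finite distance to $t=0$, $a\to\infty$) is fine.

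The genuine problem is the orientation fix on $t<0$ in the hyper-K\"ahler verification. Your closure computations are right: $d\omega_3=(b'-2\sqrt{a})\,dt\wedge dx\wedge dy$ and $d(\omega_1+i\omega_2)=0\iff(ab)'=0$. But on $t<0$ the solution satisfies $b'=-2\sqrt{a}$, and ``flipping the sign of $\omega_3$'' does not help: $d(-\omega_3)=-d\omega_3\neq 0$ there. The form that \emph{is} closed on $t<0$ is $\theta^0\wedge\theta^3-\theta^1\wedge\theta^2$, but substituting it for $\omega_3$ alone destroys the quaternionic relations, since it is anti-self-dual while $\omega_1,\omega_2$ remain self-dual, and the product of the complex structures of $\omega_1,\omega_2$ is still the self-dual one. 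The correct repair is to change the triple consistently on $t<0$: replace $\theta^3$ by $-\theta^3$ (equivalently reverse the orientation) in all three forms --- this is precisely what the $\mathrm{sign}(t)$ factors in the paper's explicit $\omega_1,\omega_2,\omega_3$ accomplish, producing forms that are self-dual for $t>0$ and anti-self-dual for $t<0$ --- or, more simply, observe that $(t,x,y,z)\mapsto(-t,x,y,z)$ is an isometry between the two components because $a$ and $b$ depend only on $|t|$, so it suffices to verify the hyper-K\"ahler structure on $\mathbb{R}_{>0}\times\mathbb{R}^3$. With either repair your argument closes and coincides in substance with the paper's proof.
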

\begin{proof}Under the ansatz $\lambda = \frac{\ell}{t}$ and $\mu=\frac{m}{t}$, the equation (\ref{secondprime:eq}) implies
that $\Lambda=0$. The equations (\ref{firstprime:eq})-(\ref{secondprime:eq}) can then 
be easily solved in terms of $(\ell , m)$. We find that $(\ell ,m)$ is one of the 
following: $(-\frac23,\frac23), (-\frac23,\frac43), (2,0), (0,0)$. However, the last three cases 
are clearly inconsistent with equation (\ref{thirdprime:eq}). (The case $(0,0)$ is also 
excluded, because such a solution would be stationary contrary to Proposition \ref{stat:prop}.) 
So we are left with studying equation (\ref{thirdprime:eq}) in the case $(\ell ,m)=(-\frac23,\frac23)$. 
Inserting $a=a_1|t|^\ell =a_1|t|^{-2/3}$ and $b=b_1|t|^m=b_1|t|^{2/3}$ we obtain $a_1/b_1^2=1/9$.

The given metric can now be explicitly shown to be hyper-K\"ahler. Consider the following two-forms:
\begin{align*}
	\omega_1 &= \sqrt{a}\,dt \wedge (dz + xdy - ydx) + \mathrm{sign}(t) bdx \wedge dy\\
	&=\sqrt{a_1}|t|^{-1/3}dt \wedge (dz + xdy - ydx) + 3\,\mathrm{sign}(t) \sqrt{a_1}|t|^{2/3}dx \wedge dy,\\
	\omega_2 &=   \sqrt{b}\,dt \wedge dy + \mathrm{sign}(t)\sqrt{ab}(dz + xdy - ydx)\wedge dx\\
	&=\sqrt{b_1}|t|^{1/3}dt \wedge dy + \mathrm{sign}(t)\sqrt{a_1b_1}(dz + xdy - ydx)\wedge dx,\\
	\omega_3 &= \sqrt{b}\,dt \wedge dx + \mathrm{sign}(t) \sqrt{ab}\,dy\wedge(dz + xdy - ydx)\\
	&=\sqrt{b_1}|t|^{1/3}dt \wedge dx +\mathrm{sign}(t) \sqrt{a_1b_1}\,dy\wedge(dz + xdy - ydx).
\end{align*}
These are (anti-)self-dual and closed, and so form a hyper-K\"ahler structure.
\end{proof}
\begin{rem} The incomplete hyper-K\"ahler metrics described in Proposition \ref{HK:prop} 
are all homothetic to a single metric, compare (\ref{rescale:eq}). The metric can be obtained from a Gibbons-Hawking ansatz and admits a conformal rescaling to 
a complete left-invariant metric on the solvable Iwasawa subgroup 
of $\mathrm{SU}(1,2)$ \cite[Section 3.2.2]{DH}. The metric does also appear in the study of collapsing hyper-K\"ahler metrics on 
K3 surfaces \cite{HSVZ}, as we learned from Simon Salamon \cite{S}.
\end{rem}
\begin{prop}\label{pr:flat-case}
	If $(a,b)$ is a Ricci-flat solution of the Einstein equations (\ref{firstprime:eq})-(\ref{thirdprime:eq}) not isometric to \eqref{eq:ab-hk}, then the associated functions $\lambda$ and $\mu$ satisfy
	\begin{equation}\label{eq:flat-gen-soln}
	\frac{2}{3\mu}\left(\mp _{2}F_1\bigg(\!\!-\!\frac{3}{4},\frac{1}{2};\frac{1}{4};-C|\mu|^{4/3}\bigg)+1\right) = t - t_0, \qquad \lambda =  \pm \frac{\mu\sqrt{1+C|\mu|^{4/3}}}{1\mp \sqrt{1+C|\mu|^{4/3}}},
	\end{equation}
	where $C >0$ and $t_0$ are constants and $_2F_1$ is the hypergeometric function. The corresponding metrics are incomplete. In fact, the maximal domains of definition of these metrics are 
	\begin{equation}
	\begin{split}
	\left]-\infty, t_0 - \frac{2C^{3/4}}{3\sqrt{\pi}}\,\Gamma\bigg(\frac{1}{4}\bigg)\Gamma\bigg(\frac{5}{4}\bigg)\right[ \times \R^3, \quad &\left]t_0 - \frac{2C^{3/4}}{3\sqrt{\pi}}\,\Gamma\bigg(\frac{1}{4}\bigg)\Gamma\bigg(\frac{5}{4}\bigg), t_0\right[ \times \R^3, \\
	\left]t_0 + \frac{2C^{3/4}}{3\sqrt{\pi}}\,\Gamma\bigg(\frac{1}{4}\bigg)\Gamma\bigg(\frac{5}{4}\bigg), +\infty\right[ \times \R^3, \quad &\left]t_0, t_0 + \frac{2C^{3/4}}{3\sqrt{\pi}}\,\Gamma\bigg(\frac{1}{4}\bigg)\Gamma\bigg(\frac{5}{4}\bigg)\right[ \times \R^3.
	\end{split}
	\end{equation}
\end{prop}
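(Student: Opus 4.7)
The plan is to set $\Lambda=0$, reduce the ODE system \eqref{firstprime:eq}--\eqref{thirdprime:eq} to a single separable ODE for the ratio $\lambda/\mu$, integrate it to obtain an algebraic relation between $\lambda$ and $\mu$, and then recover the dependence of $t$ on $\mu$ via an integral that evaluates to a hypergeometric function.

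First, on any open interval where $\mu\neq 0$ (the case $\mu\equiv 0$ is excluded because \eqref{thirdprime:eq} would then force $a=0$), equation \eqref{secondprime:eq} lets me treat $\lambda$ as a function of $\mu$. Setting $g:=\lambda/\mu$ and combining \eqref{firstprime:eq} with \eqref{secondprime:eq} via the chain rule yields the autonomous ODE $3\mu\,g\,g'=-2(g+1)(2g+1)$. Its constant solutions are $g\equiv -1$, which reproduces the hyper-K\"ahler family \eqref{eq:ab-hk} once \eqref{thirdprime:eq} fixes $a_{1}/b_{1}^{2}=1/9$, and $g\equiv -1/2$, which is excluded because it forces $a=0$ via \eqref{thirdprime:eq}. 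In the remaining case, partial fractions integrate the separable ODE to the conserved quantity $(g+1)^{2}/|2g+1|=1/(C|\mu|^{4/3})$ with a positive constant $C$. Solving this quadratic for $g$ and introducing $s:=\sqrt{1+C|\mu|^{4/3}}$ gives the two branches $\lambda=\pm \mu s/(1\mp s)$ of \eqref{eq:flat-gen-soln}. On both branches $g<-1/2$, so $4a/b^{2}=-\mu^{2}(1+2g)>0$ and \eqref{thirdprime:eq} is automatically consistent.

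Next, I would invert $\mu'=\tfrac{3}{2}\lambda\mu$ to $dt/d\mu=2/(3\mu^{2}g)$ and decompose $1/g=-1\pm 1/s$. The constant piece integrates to $2/(3\mu)$ on both branches, while the $\pm 1/s$ piece, under the substitution $u=C|\mu|^{4/3}$, reduces to a multiple of $\int u^{-7/4}(1+u)^{-1/2}\,du$. Applying the elementary antiderivative $\int u^{b-1}(1+u)^{-a}\,du=\tfrac{u^{b}}{b}\,{}_{2}F_{1}(a,b;b+1;-u)+\text{const}$ with $(a,b)=(1/2,-3/4)$ converts this to $\mp \tfrac{2}{3\mu}\,{}_{2}F_{1}(-3/4,1/2;1/4;-C|\mu|^{4/3})$, and summing the two contributions yields the stated formula for $t-t_{0}$.

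Finally, for the maximal domains and incompleteness I would analyze the limits of $t(\mu)$ at its natural boundary points. From ${}_{2}F_{1}(-3/4,1/2;1/4;z)=1-\tfrac{3}{2}z+O(z^{2})$ near $z=0$, one sees that $t\to t_{0}$ as $\mu\to 0$ on the upper-sign branch, while $|t|\to\infty$ on the lower-sign branch. As $|\mu|\to\infty$, the Gauss connection formula at $z=\infty$ gives the asymptotic ${}_{2}F_{1}(-3/4,1/2;1/4;-C|\mu|^{4/3})\sim \tfrac{\Gamma(1/4)\Gamma(5/4)}{\Gamma(1/2)}\,C^{3/4}|\mu|$, so $t-t_{0}$ converges to $\pm \tfrac{2C^{3/4}}{3\sqrt{\pi}}\Gamma(\tfrac14)\Gamma(\tfrac54)$ with sign determined jointly by the branch and $\mathrm{sgn}(\mu)$. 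Cross-tabulating the two sign choices with the two signs of $\mu$ produces exactly the four intervals in the statement. Incompleteness in each case follows because $s\mapsto (s,x_{0},y_{0},z_{0})$ is a unit-speed geodesic (since $\nabla_{\partial_{t}}\partial_{t}=0$ by the Levi-Civita formulas computed earlier) that reaches a finite endpoint of its $t$-range in finite affine parameter, and at that endpoint the metric degenerates: either $a\to 0$ (when $\mu\to 0$ on the upper-sign branch) or $a\to\infty$ and $b\to 0$ (when $|\mu|\to\infty$, mimicking the hyper-K\"ahler asymptotics $a\sim|t-t_\ast|^{-2/3}$, $b\sim|t-t_\ast|^{2/3}$). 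The main technical work lies in identifying the correct hypergeometric function and extracting its connection-formula asymptotic at $z=\infty$; the subsequent bookkeeping of branches and signs of $\mu$ is routine.
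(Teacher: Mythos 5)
Your proposal is correct and follows essentially the same route as the paper's proof: the substitution $\nu=\lambda/\mu$, separation of variables to the conserved relation $(2\nu+1)/(\nu+1)^2=-C|\mu|^{4/3}$, solving the quadratic for the two branches, integrating $2\mu'=3\lambda\mu$ to the hypergeometric formula, and the $\mu\to 0$, $|\mu|\to\infty$ asymptotics of ${}_2F_1$ to read off the four maximal intervals and incompleteness. The only differences are cosmetic: you identify the $t$-integral via the incomplete-beta antiderivative instead of the paper's first-order ODE satisfied by ${}_2F_1$, and you impose the Riemannian sign condition $\mu(\mu+2\lambda)<0$ (equivalently $C>0$) at the outset rather than, as the paper does, after first describing all solutions of the ODE system.
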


\begin{proof}
	Setting $\Lambda=0$ in  (\ref{firstprime:eq})-(\ref{thirdprime:eq}) gives us
	\begin{align}
	2\lambda'&=-(\lambda^2 + 6\lambda\mu+ 2\mu^2),\label{eq:dlamb}\\
	2\mu'&=3\lambda\mu,\label{eq:dmu}\\
	-\frac{4a}{b^2}&=\mu(\mu + 2\lambda).\label{eq:constraint}
	\end{align}
	On a domain where $\mu$ and $\mu + 2\lambda$ are non-vanishing, \eqref{eq:dlamb} and \eqref{eq:dmu} imply \begin{equation*}
	\frac{d(\mu(\mu  + 2\lambda ))}{\mu(\mu  + 2\lambda )} = \lambda dt - 2\mu dt.
	\end{equation*}
	Integrating  and then exponentiating both sides gives
	\begin{equation*}
	\mu^2 + 2\lambda\mu = -\frac{4ka}{b^2},
	\end{equation*}
	where $k$ is a non-zero constant of integration. Notice however that given $\lambda = (\ln a)'$ and $\mu= (\ln b)'$, the positive  functions  $a$ and $b$ are determined only up to overall positive constant factors. Thus, the constant $k$ may be absorbed into this indeterminacy so that the constraint \eqref{eq:constraint} is satisfied, provided that $k>0$.
	
	In particular, this argument fails when either $\mu$ or $\mu +2\lambda$ vanish. In fact, \eqref{eq:constraint} then necessarily means that $a$ vanishes, which is not allowed. Therefore, the constraint amounts to stipulating that $\mu$ and $\mu +2\lambda$ are non-vanishing on the domain of definition and of opposite sign.
	
	We will now describe general non-stationary solutions of \eqref{eq:dlamb} and \eqref{eq:dmu}. If $\lambda$ is a constant function, then so is $\mu$. We may thus assume that $\lambda$ is not everywhere $0$. As $\mu$ is constrained to be non-vanishing, $\mu'=\frac{3}{2}\lambda\mu$ must also be non-vanishing on the (open) complement of the vanishing set of $\lambda$. On this open set, we may regard $t$, and hence $\lambda(t)$, as an implicit function of $\mu$. Then $\lambda$ satisfies the following ode:
	\begin{equation*}
		\frac{d\lambda}{d\mu} = \frac{\lambda'}{\mu'}=-\frac{\lambda^2 + 6\lambda\mu+ 2\mu^2}{3\lambda\mu}.
	\end{equation*}
	Define a function $\nu$ by $\lambda = \mu\nu$. Substituting this into the above equation and rearranging the terms gives us
	\begin{equation*}
	\mu\frac{d\nu}{d\mu}= -\frac{4\nu^2 +  6\nu +2}{3\nu}.
	\end{equation*}
	There are two cases to be considered now: either the numerator of the right-hand side is identically zero or it is not. 
	
	Let us suppose the first case, that is
	\begin{equation*}
	4\nu^2 +  6\nu +2 = 2(\nu + 1)(2\nu +1) = 0.
	\end{equation*}
	Then $\nu$ takes the value $-1$ or $-\frac{1}{2}$. Note that if $\nu = -\frac{1}{2}$, then
	\begin{equation*}
	\mu + 2\lambda = \mu +2\mu\nu = 0.
	\end{equation*}
	So this is not allowed, and $\nu$ must necessarily be $-1$. Thus, $\lambda = -\mu$ and $\mu'=\frac{3}{2}\lambda\mu=-\frac{3}{2}\mu^2$. Up to constant shifts in $t$, this gives the same solution as in \eqref{eq:ab-hk}, and so is excluded as well.
	
	So $4\nu^2 +  6\nu +2$ cannot be identically zero. On the complement of its vanishing set, we we may separate the variables and integrate to obtain
	\begin{equation*}
	\ln|\nu + 1| - \frac{1}{2}\ln|2\nu + 1| = -\frac{2}{3}\ln|\mu| + \mathrm{const.}
	\end{equation*}
	Multiplying by $-2$ throughout and then exponentiating  both sides gives us
	\begin{equation}\label{eq:C}
	\frac{2\nu + 1}{(\nu + 1)^2} = -C|\mu|^{4/3},
	\end{equation}
	where $C$ is some non-zero constant.  
Then solving for $\nu$, we get
	\begin{equation*}
	\begin{split}
	\nu &= \pm \frac{\sqrt{1+C|\mu|^{4/3}}}{1\mp \sqrt{1+C|\mu|^{4/3}}}.
	\end{split}
	\end{equation*}
	Thus, $\lambda$ as a function of $\mu$ is given by
	\begin{equation*}
	\lambda = \mu\nu = \pm \frac{\mu\sqrt{1+C|\mu|^{4/3}}}{1\mp \sqrt{1+C|\mu|^{4/3}}}.
	\end{equation*}
	To now obtain $\mu$ as function of $t$, we substitute the above expression into \eqref{eq:dmu}:
	\begin{equation*}
	2\mu'=\pm\frac{3\mu^2\sqrt{1+C|\mu|^{4/3}}}{1\mp \sqrt{1+C|\mu|^{4/3}}}.
	\end{equation*}
	Separating the variables and integrating gives us the equation 
	\begin{equation}\label{hypergeom:eq}
		\frac{2}{3\mu}\left(\mp _2F_1\bigg(\!\!-\!\frac{3}{4},\frac{1}{2};\frac{1}{4};-C|\mu|^{4/3}\bigg)+1\right) = t - t_0,
	\end{equation}
	where $t_0$ is a constant of integration. To see this we remark that the hypergeometric function $_2F_1( a, b;c;x)$ 
	for $c=a+1$, $a\neq 0$, is related to the incomplete beta function $B_x(a,1-b)$ by 
	\[ B_x(a,1-b) = {_2F_1( a, b;a+1;x) \frac{x^a}{a}}.\]
	This implies that $_2F_1( a, b;a+1;x)$ satisfies the first order ode $F'(x)= \frac{a((1-x)^{-b}-F(x))}{x}$, which leads to 
	(\ref{hypergeom:eq}).

	To determine the maximal domains of definition of the metric, we determine the values of $t$ for which either at least one of $\lambda$ and $\mu$ becomes infinite or for which we have $\mu(\mu + 2\lambda) = 0$. 
	
	We find that for the upper branch of the solution, that is
	\begin{equation*}
	\frac{2}{3\mu}\left(- _{2}F_1\bigg(\!\!-\!\frac{3}{4},\frac{1}{2};\frac{1}{4};-C|\mu|^{4/3}\bigg)+1\right) = t - t_0, \qquad \lambda =   \frac{\mu\sqrt{1+C|\mu|^{4/3}}} {1- \sqrt{1+C|\mu|^{4/3}}},
	\end{equation*}
	taking the limit $\mu\rightarrow 0^\pm$ gives us  $t = t_0$ and $\lambda \rightarrow \mp \infty$. By contrast, on the lower branch of the solution, that is
	\begin{equation*}
	\frac{2}{3\mu}\left(+ _{2}F_1\bigg(\!\!-\!\frac{3}{4},\frac{1}{2};\frac{1}{4};-C|\mu|^{4/3}\bigg)+1\right) = t - t_0, \qquad \lambda =   -\frac{\mu\sqrt{1+C|\mu|^{4/3}}}{1+ \sqrt{1+C|\mu|^{4/3}}},
	\end{equation*}
	the limit $\mu \rightarrow 0^\pm$ gives $t\rightarrow \pm\infty$ and $\lambda \rightarrow 0$. 
	
	Meanwhile, setting $\mu + 2\lambda=0$ is the same as setting $\nu = \frac{\lambda}{\mu} = -\frac12$, giving us:
	\begin{equation*}
		\begin{split}
			-\frac{1}{2} &= \pm \frac{\sqrt{1+C|\mu|^{4/3}}}{1\mp \sqrt{1+C|\mu|^{4/3}}}.
	\end{split}
	\end{equation*}
This is solved only by $\mu = 0$ on the lower branch of the solution, and therefore for no finite value of $t$.

In the case that $C$ is positive, we can take the limit $\mu \rightarrow \pm \infty$ to obtain on the upper branch
\begin{equation*}
	t = t_0 \mp \frac{2C^{3/4}}{3\sqrt{\pi}}\,\Gamma\bigg(\frac{1}{4}\bigg)\Gamma\bigg(\frac{5}{4}\bigg), \quad \lambda \rightarrow \mp \infty,
\end{equation*}
and on the lower branch
\begin{equation*}
t = t_0 \pm \frac{2C^{3/4}}{3\sqrt{\pi}}\,\Gamma\bigg(\frac{1}{4}\bigg)\Gamma\bigg(\frac{5}{4}\bigg), \quad \lambda \rightarrow \mp \infty.
\end{equation*}
Note that the above cases automatically take care of the limits in which $\lambda$ becomes infinite. The above limits are obtained by specializing the 
asymptotics for $|x|\rightarrow \infty$ of the hypergeometric function $F(x)= {_{2}F_1(a,b;c;x)}$ for $a-b\not\in\mathbb{Z}$ to $(a,b,c)= (-\frac{3}{4},\frac{1}{2};\frac{1}{4})$:
\[ F(x) \sim \frac{\Gamma (b-a)\Gamma(c)}{\Gamma(b)\Gamma (c-a)}(-x)^{-a} +\frac{\Gamma (a-b)\Gamma(c)}{\Gamma(a)\Gamma (-b)}(-x)^{-b}.
\]

Putting everything together, we find that the maximal domains of definition for $t$ are  the open intervals $\left]-\infty, t_0\right[$ and $]t_0,+\infty[$ when $C < 0$, and the following open intervals when $C>0$:
\begin{equation*}
\begin{split}
	\left]-\infty, t_0 - \frac{2C^{3/4}}{3\sqrt{\pi}}\,\Gamma\bigg(\frac{1}{4}\bigg)\Gamma\bigg(\frac{5}{4}\bigg)\right[, \quad \left]t_0 - \frac{2C^{3/4}}{3\sqrt{\pi}}\,\Gamma\bigg(\frac{1}{4}\bigg)\Gamma\bigg(\frac{5}{4}\bigg), t_0\right[, \\
	\left]t_0 + \frac{2C^{3/4}}{3\sqrt{\pi}}\,\Gamma\bigg(\frac{1}{4}\bigg)\Gamma\bigg(\frac{5}{4}\bigg), +\infty\right[,\quad \left]t_0, t_0 + \frac{2C^{3/4}}{3\sqrt{\pi}}\,\Gamma\bigg(\frac{1}{4}\bigg)\Gamma\bigg(\frac{5}{4}\bigg)\right[. 
\end{split}
\end{equation*}
Now that we have described all the solutions to the ode system, we check which of them satisfy the sign constraint $\mu (\mu + 2 \lambda)<0$  to determine which of them correspond to Riemanninan metrics. Dividing the sign constraint by $\mu^2>0$, we find that it is equivalent to $2\nu + 1 < 0$. From \eqref{eq:C} we see that this happens precisely when $C>0$.
\end{proof}
\begin{rem}By taking $t$ purely imaginary and the integration constant $t_0$ complex, one can similarly describe 
Ricci-flat Lorentzian metrics of the form 
\[ g=-dt^2 + a(t)(dz+xdy-ydx)^2 + b(t)(dx^2+dy^2)\] 
from solutions of 
(\ref{hypergeom:eq}) with $C<0$. As in the Riemannian case, these are $\mathrm{O}(2)\ltimes H$-invariant. Lorentzian solutions of the 
Einstein equations invariant under a principal action of a three-dimensional Lie group with space-like orbits have been studied as cosmological models in general relativity, see 
\cite{EM}.\end{rem}

\begin{rem}
	The limit $C \rightarrow +\infty$ is in fact well-defined. In this limit, \eqref{eq:flat-gen-soln} becomes
	\begin{equation*}
	\frac{2}{3\mu} = t - t_0, \qquad \lambda =  -\mu.
	\end{equation*}
	A constant shift $t \mapsto t - t_0$ then reproduces the solution \eqref{eq:ab-hk}.
\end{rem}
\subsection{The one-loop deformed universal hypermultiplet}
In this section we exhibit a family of solutions of the Einstein equations (\ref{firstprime:eq})-(\ref{thirdprime:eq}) 
with $\Lambda=-6$ depending on a real parameter $c$. The solution is stationary only for $c=0$, in which case the metric is 
the complex hyperbolic metric (\ref{cxhyp:eq}). 

Let $c$ be a real constant and let $I$ be a connected component of the set 
\begin{equation} \{ \rho \in \mathbb{R} \mid \rho\neq 0,\; \rho+c>0\;\mbox{and}\;\rho +2 c>0\}. \end{equation}
 Let $\rho : J \stackrel{\sim}{\rightarrow}I$, $t\mapsto \rho (t)$, be a (maximal) solution of the differential equation 
\begin{equation} \label{ode:eq}\rho'(t) = 2\rho(t) \sqrt{\frac{\rho(t) +c}{\rho(t)+2c}}\end{equation}
which is defined on some interval $J$ and has the interval $I$ as its range. 
The  functions
\begin{equation}\label{ab:eq}
a(t) = \frac{\rho(t) +c}{4\rho(t)^2(\rho(t) +2c)}\quad\mbox{and}\quad b(t)=\frac{\rho (t)+2c}{2\rho(t)^2}
\end{equation}
are  positive on their domain $J$.

Recall (see Remark \ref {homoth:rem}) that the Einstein constant $\Lambda$ of a solution of (\ref{firstprime:eq})-(\ref{thirdprime:eq}) 
is either zero or the metric can be rescaled such that $\Lambda$ is any constant negative number. 
\begin{prop} \label{1lp:prop} The functions  $a(t)$ and $b(t)$ defined by (\ref{ab:eq}) and (\ref{ode:eq}) constitute a one-parameter family of 
solutions of the Einstein equations (\ref{firstprime:eq})-(\ref{thirdprime:eq}) with 
$\Lambda=-6$. The corresponding metrics are complete if and only if $c>0$ and $I=\{ \rho \mid \rho >0\}$. 
\end{prop}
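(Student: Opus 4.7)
The proposition has two parts: verifying that $(a,b)$ solves the Einstein system with $\Lambda=-6$, and characterising when the resulting metric is complete.

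For the first part, the plan is to reduce each of (\ref{firstprime:eq})-(\ref{thirdprime:eq}) to a polynomial identity in $\rho$ (and the parameter $c$). By the chain rule,
\[
\lambda = \rho'\!\left(\tfrac{1}{\rho+c}-\tfrac{2}{\rho}-\tfrac{1}{\rho+2c}\right),\qquad \mu = \rho'\!\left(\tfrac{1}{\rho+2c}-\tfrac{2}{\rho}\right),
\]
and $\lambda',\mu'$ are obtained by differentiating once more via (\ref{ode:eq}). The algebraic constraint (\ref{thirdprime:eq}) is the cleanest: one computes $a/b^2 = \rho^2(\rho+c)/(\rho+2c)^3$ from (\ref{ab:eq}), and the constraint reduces to a polynomial identity in $\rho,c$ after clearing denominators. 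The two remaining differential equations reduce similarly; indeed, the form of the square root in (\ref{ode:eq}) is precisely what is dictated by the consistency of (\ref{secondprime:eq}) with (\ref{thirdprime:eq}), so once one has verified (\ref{thirdprime:eq}) the ODE for $\rho$ \emph{forces} the remaining identities. This is the natural order in which to organise the calculation.

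For completeness, I would invoke the standard fact that a cohomogeneity-one metric $g=dt^2+g_t$ on $J\times H$ with $g_t$ a smooth family of left-invariant Riemannian metrics on the Heisenberg group is geodesically complete if and only if $J = \R$: the $H$-orbits are complete (being homogeneous), so Hopf-Rinow reduces the question to completeness in the radial $t$-direction. It thus suffices to determine for which pairs $(c,I)$ the maximal solution of (\ref{ode:eq}) is defined on all of $\R$. The connected components of $\{\rho\neq 0,\;\rho+c>0,\;\rho+2c>0\}$ are $(-c,0)$ and $(0,\infty)$ when $c>0$, just $(0,\infty)$ when $c=0$, and just $(-2c,\infty)$ when $c<0$.

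A case-by-case asymptotic analysis of $\rho'=2\rho\sqrt{(\rho+c)/(\rho+2c)}$ at the endpoints then decides the question. At $\rho\to 0$ (with $c\neq 0$) one has $\rho'\sim\sqrt{2}\,\rho$, so $\int d\rho/\rho'$ diverges and $|t|\to\infty$. At $\rho\to -c^+$ (with $c>0$) the factor $\sqrt{\rho+c}$ vanishes and the integral is finite, yielding a finite $t$-endpoint. At $\rho\to -2c^+$ (with $c<0$), $\rho'$ diverges like $(\rho+2c)^{-1/2}$ and the integral is again finite. At $\rho\to +\infty$, $\rho'\sim 2\rho$ so $t\to+\infty$. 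Combining these, $J=\R$ precisely for $I=(0,\infty)$ together with $c>0$ (the borderline $c=0$ being the stationary complex hyperbolic case already covered by Proposition \ref{stat:prop}), which gives the stated criterion. The main obstacle is the bookkeeping in the algebraic verification of the Einstein equations; the completeness argument itself is a quick endpoint analysis once the enumeration of components is in place.
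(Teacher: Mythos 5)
Your overall strategy is sound, but it diverges from the paper's proof, which is essentially citation-based: the paper passes to the coordinate $\rho$ and identifies the metric with the one-loop deformed universal hypermultiplet metric as written in \cite{CS}, then quotes the known facts that this metric is Einstein with $\Lambda=-6$ and complete precisely when $c>0$ and $I=\{\rho>0\}$ (\cite{ACDM}); only as an alternative does it sketch a direct verification, in which \eqref{secondprime:eq} and \eqref{thirdprime:eq} are checked and \eqref{firstprime:eq} is deduced by differentiating the constraint, using $(a/b^2)'=(a/b^2)(\lambda-2\mu)$, on the set $\{\mu\neq 0\}$ and then by continuity. Your proposal instead verifies everything by hand: reduction of the three equations to rational identities in $\rho$ (which does work, and sidesteps the $\mu=0$ issue entirely), and a self-contained completeness analysis via the maximal interval $J$ of the $\rho$-flow. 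What the paper's route buys is brevity and the identification of the family with a known quaternionic K\"ahler metric (used elsewhere, e.g.\ for its isometry group); what your route buys is independence from \cite{ACDM,CS}, at the cost of more computation and of having to justify the completeness criterion yourself. Your component enumeration and endpoint asymptotics for $\rho'=2\rho\sqrt{(\rho+c)/(\rho+2c)}$ are correct ($\rho\to 0$: logarithmic divergence of $t$; $\rho\to-c^+$, $\rho\to-2c^+$: finite $t$; $\rho\to\infty$: $t\to\infty$), and your remark about the borderline $c=0$ (complete, complex hyperbolic) is consistent with the intended reading of the statement.

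Two points need tightening. First, the claim that once \eqref{thirdprime:eq} is verified ``the ODE for $\rho$ forces the remaining identities'' is an overstatement: differentiating the verified constraint along the flow yields only one linear relation between $\lambda'$ and $\mu'$, so it can deliver at most one of \eqref{firstprime:eq}, \eqref{secondprime:eq}; you must still check one of them (say \eqref{secondprime:eq}) directly, after which the other follows where $\mu\neq0$ and by continuity — this is exactly the paper's alternative argument, and your fallback of verifying all three identities directly also covers it, so the gap is organisational rather than fatal. Second, the ``standard fact'' that $g=dt^2+g_t$ on $J\times H$ is complete iff $J=\mathbb{R}$ is not quite a consequence of Hopf--Rinow plus completeness of the orbits as stated: for the ``if'' direction you need that every divergent curve has infinite length, which uses that on any compact $t$-interval the left-invariant metrics $g_t$ are uniformly comparable to a fixed left-invariant (hence complete) metric on $H$, so curves escaping in the $H$-direction at bounded $t$ have infinite length, while curves with unbounded $t$ already have infinite length from the $dt^2$ term; the ``only if'' direction uses that the $t$-lines are unit-speed geodesics. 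With these justifications supplied, your argument gives a complete, self-contained proof of the proposition.
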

\begin{proof}
Writing the metric $g=dt^2+a(t)(dz+xdy-ydx)^2 + b(t)(dx^2+dy^2)$ in terms of the 
coordinates $(\rho , x,y,z)$ instead of $(t,x,y,z)$ shows that it coincides with the 
one-loop deformed universal hypermultiplet metric, as given in equation (1.1) of \cite{CS}. (For the physical origins and significance of this metric see \cite{AMTV,RSV2006}.) The metric is not only Einstein of Einstein constant $-6$ but is 
half conformally flat and is complete if and only if $c>0$ and $I=\{ \rho \mid \rho >0\}$, see \cite{ACDM}. 
Moreover, it was shown in \cite[Theorem 4.5]{CST} that for $c\neq 0$ the metric has the isometry group $\mathrm{O}(2) \rtimes H$, where $H$ denotes the Heisenberg group. (For $c=0$ the metric is the complex hyperbolic metric discussed in Section \ref{stat:sec}.) This proves Proposition \ref{1lp:prop}.

Alternatively, one can check directly that the functions $a(t)$ and $b(t)$ solve the system (\ref{firstprime:eq})-(\ref{thirdprime:eq}).
In fact, the equations (\ref{secondprime:eq}) and (\ref{thirdprime:eq}) are easily checked and (\ref{thirdprime:eq})
implies (\ref{firstprime:eq}) on the set where $\mu\neq 0$. The latter is shown by differentiating  (\ref{thirdprime:eq})
and using the simple equation 
\[ \left( \frac{a}{b^2}\right)' = \frac{a}{b^2}(\lambda -2 \mu).\]
A short calculation shows that for the above functions $a$ and $b$, the function $\mu$ 
vanishes only if $c<0$ and $-4c\in I$. In that case, the zero is at $\rho = -4c$, i.e.\ at 
$t=\rho^{-1}(-4c)$. The equation (\ref{firstprime:eq}) follows by continuity, since the complement 
of the zero set is dense. 
\end{proof}

\begin{prop}
	A solution $(\lambda(t),\mu(t))$ of the Einstein equations \eqref{firstprime:eq}-\eqref{thirdprime:eq} corresponding to the one-loop deformed universal hypermultiplet satisfies the following polynomial constraint of degree $4$:
	\begin{equation}\label{eq:P}
		P(\lambda, \mu):=(\lambda + \mu)^3\mu -4(3\lambda^2 +18\lambda\mu +  11 \mu^2) + 512 = 0.
	\end{equation}
\end{prop}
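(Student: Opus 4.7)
The plan is to verify the identity by direct substitution using the explicit formulas (\ref{ab:eq}) and (\ref{ode:eq}). Differentiating $\ln a$ and $\ln b$ with respect to $\rho$ and then applying the chain rule gives $\lambda = \rho' \cdot d(\ln a)/d\rho$ and $\mu = \rho' \cdot d(\ln b)/d\rho$. After combining over common denominators, a pleasant cancellation produces the compact expressions
$$
\mu = -\frac{\rho'(\rho+4c)}{\rho(\rho+2c)}, \qquad \sigma := \lambda+\mu = -\frac{\rho'(3\rho+4c)}{\rho(\rho+c)},
$$
while (\ref{ode:eq}) gives $(\rho')^2 = 4\rho^2(\rho+c)/(\rho+2c)$. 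These three closed forms are the key simplification that makes the remaining computation tractable.

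Substituting $\lambda = \sigma - \mu$, a short calculation rewrites $P$ in the variables $(\sigma, \mu)$ as
$$
P = \sigma^3\mu - 12\sigma^2 - 48\sigma\mu + 16\mu^2 + 512,
$$
so every monomial now involves only an \emph{even} power of $\rho'$. Inserting the above closed forms and replacing $(\rho')^2$ by $4\rho^2(\rho+c)/(\rho+2c)$, the expression for $P$ becomes a rational function of $\rho$ and $c$ with common denominator $(\rho+c)(\rho+2c)^3$. Clearing denominators, the claim $P = 0$ becomes the polynomial identity
$$
(3\rho+4c)^3(\rho+4c) - 3(3\rho+4c)^2(\rho+2c)^2 - 12(\rho+c)(\rho+2c)(3\rho+4c)(\rho+4c) + 4(\rho+c)^2(\rho+4c)^2 + 32(\rho+c)(\rho+2c)^3 = 0,
$$
homogeneous of degree $4$ in $(\rho,c)$, which I would verify by expanding each of the five summands and checking that the coefficient of $\rho^{4-i}c^i$ vanishes for $i = 0, 1, 2, 3, 4$.

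The only obstacle is the arithmetic of this final expansion; there is no conceptual difficulty once the compact forms for $\mu$ and $\sigma$ are obtained. A slightly more conceptual alternative would be to show that $\{P = 0\}$ is an invariant set of the flow (\ref{firstprime:eq})--(\ref{secondprime:eq}) with $\Lambda=-6$, i.e.\ that the time-derivative $P_\lambda\lambda' + P_\mu\mu'$ vanishes on $\{P=0\}$, together with the observation that $P(\pm 4,\pm 2) = 0$ at the complex hyperbolic stationary endpoint of the one-loop orbit (Proposition \ref{stat:prop}). However, that divisibility check carries essentially the same computational cost as the direct polynomial expansion.
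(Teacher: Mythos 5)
Your proposal is correct, and it takes a genuinely different route from the paper. The paper starts from the same parametrization of $\lambda$ and $\mu$ by $\rho$ (its equations \eqref{lamb-rho:eq}--\eqref{mu-rho:eq} and \eqref{ode:eq}), but then proceeds by \emph{elimination}: it forms the ratio \eqref{eq:lplusm}, solves the resulting quadratic for $c/\rho$ in terms of $D=(9\lambda+7\mu)/(\lambda-\mu)$, substitutes into the square-root expression for $\lambda+\mu$ coming from \eqref{ode:eq}, and clears radicals, arriving at $(\lambda+\mu)^2P(\lambda,\mu)=0$; it must then argue that $\lambda+\mu$ does not vanish generically, all under the standing assumption $\mu\neq 0$. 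Your route is a direct \emph{verification} by substitution: since every monomial of $P$ (equivalently of your rewritten $\sigma^3\mu-12\sigma^2-48\sigma\mu+16\mu^2+512$ with $\sigma=\lambda+\mu$) has even degree, only $(\rho')^2=4\rho^2(\rho+c)/(\rho+2c)$ enters, no square roots or branch choices appear, no non-vanishing hypotheses are needed, and there is no spurious factor to discard. What remains is exactly the homogeneous degree-$4$ identity you display, and it does close: the coefficients of $\rho^4,\rho^3c,\rho^2c^2,\rho c^3,c^4$ in your five summands sum to $27-27-36+4+32$, $216-180-300+40+224$, $576-444-840+132+576$, $640-480-960+160+640$, $256-192-384+64+256$, each equal to $0$. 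So the elimination-based proof in the paper has the advantage of \emph{deriving} $P$ (explaining where it comes from), while your substitution proof is shorter and cleaner once $P$ is given. One caveat concerning the alternative you sketch at the end: flow-invariance of $\{P=0\}$ (the paper's identity $P_\Lambda'=-3\mu P_\Lambda$) together with $P(\pm4,\pm2)=0$ at the stationary point does not by itself place the one-loop orbit inside $\{P=0\}$, since along any solution $P_\Lambda(t)=P_\Lambda(t_0)\,e^{-3\int\mu}$ can tend to $0$ at an endpoint while being nowhere zero; that variant would need an extra argument. As your main argument is the direct substitution, this does not affect the correctness of the proposal.
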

\begin{proof}
	From \eqref{ab:eq}, we obtain the following parametrisation of $\lambda$ and $\mu$ in terms of $\rho$:
	\begin{align}
		\label{lamb-rho:eq}\lambda &= (\ln a)' = \frac{\rho'}{\rho}\left(\frac{\rho}{\rho + c} - 2 - \frac{\rho}{\rho + 2c}\right),\\
	\label{mu-rho:eq}	\mu &= (\ln b)' = \frac{\rho'}{\rho}\left(\frac{\rho}{\rho + 2c} - 2\right)=-\frac{\rho'}{\rho}\left(\frac{\rho +4c}{\rho + 2c}\right).
	\end{align}
In particular, on a domain where $\mu$ is non-vanishing, we have $\frac{4c}{\rho} + 1 \neq 0$ and we can combine the above equations to get 
\begin{equation}\label{eq:lplusm}
	\frac{\lambda + \mu}{\mu} = \left(\frac{\rho}{\rho + c} - 4 \right)\bigg(\frac{\rho}{\rho + 2c} - 2\bigg)^{-1} = 2 + \bigg(\frac{4c}{\rho} + 1\bigg)^{-1}\bigg(\frac{c}{\rho} + 1\bigg)^{-1}. 
\end{equation}
By subtracting $2$ throughout, we see that this implies that $\lambda - \mu$ is non-vanishing. In particular, we have a quadratic equation in $\frac{c}{\rho}$, which we can then solve to obtain
\begin{equation}\label{eq:D}
	\frac{c}{\rho} = \frac{-5 \pm \sqrt{D}}{8} \quad \mbox{where}\quad D = \frac{9\lambda + 7\mu}{\lambda - \mu}.
\end{equation}
Now substituting \eqref{ode:eq} into \eqref{lamb-rho:eq} and \eqref{mu-rho:eq}, we get
\begin{equation}
	\lambda + \mu = 2\sqrt{\frac{\rho + c}{\rho + 2c}}\left(\frac{\rho}{\rho + c} - 4 \right) = -\frac{2(3 + 4c/\rho)}{\sqrt{(1 + 2c/\rho)(1 + c/\rho)}}.
\end{equation}
Upon substituting \eqref{eq:D} into the above and eliminating the square roots, we then obtain
\begin{equation}
	(\lambda + \mu)^2P(\lambda, \mu)=0
\end{equation}
From \eqref{eq:lplusm} we see that $\lambda + \mu$ vanishes if and only if $(3\rho + 4c)(\rho + 2c)$ vanishes. Since this is not generically the case, the constraint \eqref{eq:P} follows. 
\end{proof}

\begin{rem}
	More generally, if we introduce the polynomial
	\begin{equation*}
		P_\Lambda(\lambda, \mu):=(\lambda + \mu)^3\mu +\frac{2\Lambda}{3}(3\lambda^2 +18\lambda\mu + 11 \mu^2) + \frac{128\Lambda^2}{9},
	\end{equation*}
	 then \eqref{firstprime:eq} and \eqref{secondprime:eq} imply
	\begin{equation}\label{eq:Plamb}
		P_\Lambda' = - 3\mu P_\Lambda.
	\end{equation}
	So the vanishing set of $P_\Lambda(\lambda,\mu)$ contains a flowline. When $\Lambda = - 6$, this becomes the constraint \eqref{eq:P}. 
\end{rem}

\begin{prop}\label{prop:K-constraint}
 Any solution $(\lambda(t),\mu(t))$ of the Einstein equations \eqref{firstprime:eq}-\eqref{thirdprime:eq} with negative Einstein constant $\Lambda$ satisfies the following constraint on each maximal domain of definition:
 \begin{equation}\label{eq:flowlines}
 \left(\frac{\mu^2 +\lambda\mu +2\Lambda}{\mu^2 +2\lambda\mu +4\Lambda}\right)^3 P_\Lambda(\lambda,\mu) = K,
 \end{equation}
 for some constant $K$. Furthermore, when $K=0$, the solution has to be one of the following:
 	\begin{itemize}
 		\item Stationary solutions at $\pm(2\sqrt{-2\Lambda/3},\sqrt{-2\Lambda/3})$.
 		\item Solutions with maximal domains of definition $\left]-\infty,t_0\right[$ and $\left]t_0,+\infty\right[$ which are given by
 		\begin{equation}\label{eq:sol1}
 		\mu =  \sqrt{-\frac{2\Lambda}{3}} \frac{ e^{\sqrt{-6\Lambda}(t - t_0)} \pm 1}{ e^{\sqrt{-6\Lambda}(t - t_0)} \mp 1}, \quad \lambda =  2\sqrt{-\frac{2\Lambda}{3}}\frac{ e^{2\sqrt{-6\Lambda}(t - t_0)} \pm 4e^{\sqrt{-6\Lambda}(t - t_0)} + 1}{ e^{2\sqrt{-6\Lambda}(t - t_0)} - 1}.
 		\end{equation}
 		\item A solution with maximal domains of definition $\left]-\infty,t_0\right[$ and $\left]t_0,+\infty\right[$ which is given by
 		\begin{equation}\label{eq:sol2}
 		\begin{split}
 			&\lambda = \frac{\mathrm{sign}(t-t_0)8\sqrt{-2\Lambda/3}}{\sqrt{3 + 18\sigma +11\sigma^2- \sqrt{(1-\sigma)^3(9+7\sigma)}}},\quad \mu = \lambda\sigma, \quad \mbox{where $0<\sigma<1$,} \\
 			&\int_{0}^\sigma \frac{-8\sqrt{3 +18u + 11u^2 -\sqrt{(1-u)^3(9+7u)}}\,du}{(1-u)(1-5u)(9+7u) - 3(1+3u)\sqrt{(1-u)^3(9+7u)}}= \sqrt{-\frac{2\Lambda}{3}}|t-t_0|.
 		\end{split}
 		\end{equation}
 		\item A solution with maximal domains of definition $\left]-\infty,t_0\right[$ and $\left]t_0,+\infty\right[$ which is given by
 		\begin{equation}\label{eq:sol3}
 		\begin{split}
 		&\lambda = \frac{-\mathrm{sign}(t-t_0)8\sqrt{-2\Lambda/3}}{\sqrt{3 + 18\sigma +11\sigma^2+ \sqrt{(1-\sigma)^3(9+7\sigma)}}},\quad \mu = \lambda\sigma, \quad \mbox{where $-1<\sigma<\frac12$,} \\
 		&\int_{-1}^\sigma \frac{8\sqrt{3 +18u + 11u^2 +\sqrt{(1-u)^3(9+7u)}}\,du}{(1-u)(1-5u)(9+7u) + 3(1+3u)\sqrt{(1-u)^3(9+7u)}}= \sqrt{-\frac{2\Lambda}{3}}|t-t_0|.
 		\end{split}
 		\end{equation}
 		\item Solutions defined for all $t\in \mathbb{R}$ given by
 		\begin{equation}\label{eq:sol4}
 		\begin{split}
 		&\lambda = \frac{\pm 8\sqrt{-2\Lambda/3}}{\sqrt{3 + 18\sigma +11\sigma^2+ \sqrt{(1-\sigma)^3(9+7\sigma)}}},\quad \mu = \lambda\sigma,\quad \mbox{where $\frac12<\sigma<1$,} \\
 		&\int_{3/4}^\sigma \frac{\mp 8\sqrt{3 +18u + 11u^2 +\sqrt{(1-u)^3(9+7u)}}\,du}{(1-u)(1-5u)(9+7u) + 3(1+3u)\sqrt{(1-u)^3(9+7u)}}= \sqrt{-\frac{2\Lambda}{3}}(t-t_0).
 		\end{split}
 		\end{equation}
 	\end{itemize}
 	With the exception of \eqref{eq:sol2}, all of them correspond to Riemannian metrics.
\end{prop}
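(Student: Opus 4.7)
The strategy has two parts: prove the conservation law \eqref{eq:flowlines}, then classify the fibre $K=0$.

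\emph{Conservation law.} Introduce $Q := \mu^2 + \lambda\mu + 2\Lambda$ and $R := \mu^2 + 2\lambda\mu + 4\Lambda$. The constraint \eqref{thirdprime:eq} reads $R = -4a/b^2$, so $(\ln|R|)' = (\ln a)' - 2(\ln b)' = \lambda - 2\mu$. A short computation using \eqref{firstprime:eq}--\eqref{secondprime:eq} yields
\[ Q' = (\lambda - \mu)Q, \]
whence $(\ln|Q/R|)' = \mu$. Combined with $(\ln|P_\Lambda|)' = -3\mu$ from \eqref{eq:Plamb}, the product $(Q/R)^3 P_\Lambda$ has vanishing derivative and is therefore equal to a constant $K$. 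Since $R$ never vanishes, $K=0$ forces $Q\equiv 0$ or $P_\Lambda \equiv 0$ on each maximal interval (a pointwise dichotomy promoted to a global one by analyticity of $Q$ and $P_\Lambda$ as functions of $t$).

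\emph{The case $Q\equiv 0$.} Here $\mu \neq 0$ (otherwise $\Lambda = 0$), so $\lambda = -\mu - 2\Lambda/\mu$ is forced. Substitution into \eqref{secondprime:eq} reduces the system to the separable ODE
\[ \mu' = -\tfrac{3}{2}(\mu^2 - k^2), \qquad k := \sqrt{-2\Lambda/3}. \]
The fixed points $\mu = \pm k$ yield $\lambda = \pm 2k$ and give the stationary solutions of the first bullet. Integration in the non-stationary regime, combined with $\lambda = -\mu - 2\Lambda/\mu$, produces the hyperbolic-type formulas \eqref{eq:sol1}. Since $R|_{Q=0} = -\mu^2 < 0$, the Riemannian sign constraint holds automatically in this case.

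\emph{The case $P_\Lambda \equiv 0$, and the main obstacle.} On this branch $\lambda \neq 0$ throughout, because $P_\Lambda(0,\mu) = 0$ is a quadratic in $\mu^2$ with negative discriminant. Setting $\sigma := \mu/\lambda$, the equation $P_\Lambda(\lambda,\lambda\sigma) = 0$ becomes a quadratic in $\lambda^2$:
\[ \sigma(1+\sigma)^3\, \lambda^4 + \tfrac{2\Lambda}{3}(3+18\sigma+11\sigma^2)\, \lambda^2 + \tfrac{128\Lambda^2}{9} = 0. \]
Its discriminant simplifies via the identity $(3+18\sigma+11\sigma^2)^2 - 128\sigma(1+\sigma)^3 = (1-\sigma)^3(9+7\sigma)$ (verified by noting that $\sigma = 1$ is a triple root of the difference) to $\tfrac{4\Lambda^2}{9}(1-\sigma)^3(9+7\sigma)$. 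Solving and rationalizing yields the two branches
\[ \lambda^2 = \frac{64k^2}{(3+18\sigma+11\sigma^2) \pm \sqrt{(1-\sigma)^3(9+7\sigma)}}, \]
corresponding respectively to \eqref{eq:sol2} and to \eqref{eq:sol3}--\eqref{eq:sol4} (the branch through the stationary point $\sigma = 1/2$). Substituting $\mu = \lambda\sigma$ with $\lambda^2$ as above into \eqref{secondprime:eq} reduces the time evolution to a separable equation $dt = G_\pm(\sigma)\,d\sigma$, whose quadrature reproduces the integrals in the excerpt. The $\sigma$-intervals are pinned down by $\Delta \geq 0$ (forcing $\sigma \in [-9/7, 1]$), positivity of $\lambda^2$, and avoidance of degenerate endpoints; the Riemannian sign $R < 0$ then rules out \eqref{eq:sol2} from the Riemannian list. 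The principal difficulty lies precisely here: the discriminant factorization, the matching of each rationalized branch with the correct sub-family of \eqref{eq:sol2}--\eqref{eq:sol4}, the explicit setup of the separable integral, and the sign bookkeeping required to recover exactly the $\sigma$-intervals and sub-families listed in the proposition.
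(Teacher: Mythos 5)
Your first half coincides with the paper's argument and is sound: the flow identities $(\mu^2+\lambda\mu+2\Lambda)'=(\lambda-\mu)(\mu^2+\lambda\mu+2\Lambda)$, $(\mu^2+2\lambda\mu+4\Lambda)'=(\lambda-2\mu)(\mu^2+2\lambda\mu+4\Lambda)$ and $P_\Lambda'=-3\mu P_\Lambda$ give the conserved quantity \eqref{eq:flowlines} (the paper writes the derivative of the combination directly, which also covers zeros of $Q$ and $P_\Lambda$, but your dichotomy ``$Q\equiv 0$ or $P_\Lambda\equiv 0$'' is legitimate, most cleanly because each factor satisfies a linear first-order ODE and hence vanishes identically once it vanishes at a point); note also that $\mu^2+2\lambda\mu+4\Lambda=-4a/b^2<0$ uses \eqref{thirdprime:eq} together with positivity of $a,b$, exactly as in the paper. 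The case $Q\equiv 0$ is complete and matches the paper: stationary points, the separable Riccati equation $2\mu'=-3\mu^2-2\Lambda$, the solutions \eqref{eq:sol1}, and the sign $\mu^2+2\lambda\mu+4\Lambda=-\mu^2<0$.

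The genuine gap is in the case $P_\Lambda\equiv 0$. There you stop at the discriminant identity and the formula $\lambda^2=64k^2/\bigl(3+18\sigma+11\sigma^2\pm\sqrt{(1-\sigma)^3(9+7\sigma)}\bigr)$ and explicitly defer, as ``the principal difficulty,'' precisely the steps that constitute the content of the second half of the proposition: (i) deriving from \eqref{firstprime:eq}--\eqref{secondprime:eq} the equation $2\sigma'=2\sigma(1+\sigma)(2+\sigma)\lambda+4\Lambda(1+3\sigma)/\lambda$ and reducing it, branch by branch, to the separable quadratures appearing in \eqref{eq:sol2}--\eqref{eq:sol4}; (ii) pinning down the admissible $\sigma$-ranges, which requires excluding $\sigma=1$ (the only points where the zero sets of $P_\Lambda$ and $\mu^2+2\lambda\mu+4\Lambda$ meet) and recognizing $\sigma=\tfrac12$ on the branch with the plus sign as the stationary solution already found; (iii) the integrability/divergence analysis of the integrands at the endpoints ($u\to 0$ and $u\to 1$ on one branch; $u\to -1$, $u\to\tfrac12$, $u\to 1$ on the other), which is exactly what decides whether the maximal domain of definition is a half-line, as in \eqref{eq:sol2} and \eqref{eq:sol3}, or all of $\mathbb{R}$, as in \eqref{eq:sol4}; and (iv) the explicit sign computation $\sigma^2+2\sigma+4\Lambda/\lambda^2=\tfrac{1}{32}\bigl((\sigma-9)(1-\sigma)\pm 3\sqrt{(1-\sigma)^3(9+7\sigma)}\bigr)$, which shows that the branch of \eqref{eq:sol2} violates $\mu^2+2\lambda\mu+4\Lambda<0$ while \eqref{eq:sol3}, \eqref{eq:sol4} satisfy it. Without (i)--(iv) the assertions about maximal domains and about which families are Riemannian are unproved, and these are the parts the main completeness theorem later relies on. A smaller slip: your pairing of the $\pm$ sign with the sub-families is reversed as written; the minus sign in the denominator of $\lambda^2$ is the \eqref{eq:sol2} branch, the plus sign gives \eqref{eq:sol3}--\eqref{eq:sol4}.
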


\begin{proof}
	Equations \eqref{firstprime:eq} and \eqref{secondprime:eq} imply 
	\begin{align}
		(\mu^2 +2\lambda\mu +4\Lambda)' &= (\lambda - 2\mu)(\mu^2 +2\lambda\mu +4\Lambda),\label{eq:flow1}\\
		(\mu^2 +\lambda\mu +2\Lambda)' &= (\lambda -\mu)(\mu^2 +\lambda\mu +2\Lambda).\label{eq:flow2}
	\end{align}
	On the domain where $\mu^2 + 2\lambda \mu + 4\Lambda$ is non-vanishing, \eqref{eq:flow1} can be written as
	\begin{equation*}
		\frac{d(\mu^2 +2\lambda\mu +4\Lambda)}{\mu^2 +2\lambda\mu +4\Lambda} = \lambda dt - 2\mu dt.
	\end{equation*}
	Integrating and then exponentiating gives
	\begin{equation*}
		\mu^2 +2\lambda\mu +4\Lambda = -\frac{4ka}{b^2},
	\end{equation*}
	where $k$ is a non-zero constant of integration. As we had noted in the proof of Proposition~\ref{pr:flat-case}, given $\lambda = (\ln a)'$ and $\mu= (\ln b)'$, the functions $a$ and $b$ are determined only up to overall non-zero constant factors. Thus, the constant $k$ may be absorbed into this indeterminacy so that \eqref{thirdprime:eq} is automatically 
	satisfied, provided that $k>0$. So, we see that \eqref{thirdprime:eq} is equivalent to the condition $\mu^2 +2\lambda\mu +4\Lambda<0$.
	
	Given that $\mu^2 +2\lambda\mu +4\Lambda$ is nowhere vanishing, \eqref{eq:Plamb}, \eqref{eq:flow1}, and \eqref{eq:flow2} can be combined into a single equation:
	\begin{equation*}
		\left(\frac{(\mu^2 +\lambda\mu +2\Lambda)^\ell P_\Lambda^m}{(\mu^2 +2\lambda\mu +4\Lambda)^n}\right)'= (\ell(\lambda - \mu) -3m\mu - n(\lambda - 2\mu))\frac{(\mu^2 +\lambda\mu +2\Lambda)^\ell P_\Lambda^m}{(\mu^2 +2\lambda\mu +4\Lambda)^n},
	\end{equation*}
	where $\ell, m, n$ are arbitrary non-negative integers. In particular, we see that the right-hand side vanishes for the choice $\ell = 3, m = 1, n = 3$. The constraint \eqref{eq:flowlines} follows.
	
	For $K=0$, we have either
	\begin{equation*}
		\mu^2 + \lambda \mu + 2\Lambda = 0 \quad \mbox{or}\quad P_\Lambda(\lambda,\mu) = 0.
	\end{equation*}
	If $\mu^2 + \lambda \mu + 2\Lambda=0$, then \eqref{secondprime:eq} becomes
	\begin{equation*}
		2\mu' = -3\mu^2 - 2\Lambda.
	\end{equation*}
	If the right-hand side vanishes, then we obtain the stationary solutions
	\begin{equation}\label{eq:general-stationary}
	\mu = \pm \sqrt{-\frac{2\Lambda}{3}}, \quad \lambda = -\mu-\frac{2\Lambda}{\mu} =\pm 2\sqrt{-\frac{2\Lambda}{3}}. 
	\end{equation}
	Otherwise, we can separate the variables and integrate to obtain
	\begin{equation*}
		\log\left|\frac{\mu + \sqrt{-2\Lambda/3}}{\mu - \sqrt{-2\Lambda/3}}\right| = \sqrt{-6\Lambda}(t - t_0),
	\end{equation*}
	where $t_0$ is an integration constant. This gives the solutions
	\begin{equation*}
		\mu =  \sqrt{-\frac{2\Lambda}{3}} \frac{ e^{\sqrt{-6\Lambda}(t - t_0)} \pm 1}{ e^{\sqrt{-6\Lambda}(t - t_0)} \mp 1}, \quad \lambda = -\mu-\frac{2\Lambda}{\mu} =  2\sqrt{-\frac{2\Lambda}{3}}\frac{ e^{2\sqrt{-6\Lambda}(t - t_0)} \pm 4e^{\sqrt{-6\Lambda}(t - t_0)} + 1}{ e^{2\sqrt{-6\Lambda}(t - t_0)} - 1}.
	\end{equation*}
	Both the upper and lower solutions are well-defined eveywhere except $t=t_0$. Moreover, the vanishing sets of $\mu^2 + \lambda\mu + 2\Lambda$ and $\mu^2 + 2\lambda\mu + 4\Lambda$ do not intersect. Thus, their maximal domains of definition are the open intervals $\left]-\infty,t_0\right[$ and $\left]t_0,+\infty\right[$.

	Now we consider the case $P_\Lambda(\lambda,\mu)=0$. Observe that at $\lambda = 0$,  we have
	\begin{equation*}
		P_\Lambda(0,\mu)= \left(\mu^2 + \frac{11\Lambda}{3}\right)^2 + \frac{7\Lambda^2}{9} > 0.
	\end{equation*}
	Thus, it follows that $\lambda\neq 0$ on the vanishing set of $P_\Lambda$. We can therefore define $\sigma = \frac{\mu}{\lambda}$ and rewrite $P_\Lambda(\lambda,\mu)=0$ in terms of it as
	\begin{equation*}
		(1 + \sigma)^3\sigma +\frac{2\Lambda}{3\lambda^2}(3 +18\sigma + 11 \sigma^2) + \frac{128\Lambda^2}{9\lambda^4}=0.
	\end{equation*} 
	This is quadratic in $\frac{1}{\lambda^2}$, so we can solve for it to obtain
	\begin{equation}\label{eq:lamsig}
		\frac{1}{\lambda^2} = \frac{-3(3 + 18\sigma +11\sigma^2)\pm 3\sqrt{(1-\sigma)^3(9+7\sigma)}}{128\Lambda}.
	\end{equation}
	In order for the right-hand side to be real, we must have $-\frac{9}{7}\le \sigma \le 1$. Given that $\Lambda<0$, the upper solution is positive for $0<\sigma \le 1$ and the lower solution is positive for $-1<\sigma \le 1$. Note however that the case $\sigma = 1$ has to be excluded as it implies $\mu = \lambda = \pm 2\sqrt{-\Lambda/3}$ and these are precisely the points where the vanishing sets of $P_\Lambda$ and $\mu^2 + 2\lambda\mu + 4\Lambda$ intersect. To summarise, the allowed range for $\sigma$ in the upper solution is $\left]0,1\right[$ while that in the lower solution is $\left]-1,1\right[$. Moreover, for each such case, there are two solutions for $\lambda$, one positive and one negative. 
	
	Meanwhile, from \eqref{firstprime:eq} and \eqref{secondprime:eq}, we can derive the following ode for $\sigma$:
	\begin{equation*}
		2\sigma' = 2\sigma(1+\sigma )(2+\sigma)\lambda + \frac{4\Lambda(1+3\sigma )}{\lambda}.
	\end{equation*}
	We can now obtain separable odes for $\sigma$ by substituting the solutions for $\lambda$ in \eqref{eq:lamsig} into the above:
	\begin{align}
		&\mp \frac{8\sqrt{3 +18\sigma + 11\sigma^2 -\sqrt{(1-\sigma)^3(9+7\sigma)}}}{\sqrt{-2\Lambda/3}}\,\sigma'\nonumber\\
		&\qquad\qquad=(1-\sigma)(1-5\sigma)(9+7\sigma) - 3(1+3\sigma)\sqrt{(1-\sigma)^3(9+7\sigma)},\label{eq:sigmaode1}\\
		&\mp \frac{8\sqrt{3 +18\sigma + 11\sigma^2 +\sqrt{(1-\sigma)^3(9+7\sigma)}}}{\sqrt{-2\Lambda/3}}\,\sigma'\nonumber\\
		&\qquad\qquad=(1-\sigma)(1-5\sigma)(9+7\sigma) + 3(1+3\sigma)\sqrt{(1-\sigma)^3(9+7\sigma)}.\label{eq:sigmaode2}
	\end{align}
	The $\mp$ in the above refers to the choice of sign $\pm$ of $\lambda$. The allowed range for $\sigma$ in  \eqref{eq:sigmaode1} is $\left]0,1\right[$ while that in \eqref{eq:sigmaode2} is $\left]-1,1\right[$. The right-hand side in \eqref{eq:sigmaode1} is non-vanishing for all $\sigma$ in the allowed range $\left]0,1\right[$, while the right-hand side in \eqref{eq:sigmaode2} vanishes only at $\sigma = \frac12$ in the allowed range $\left]-1,1\right[$. This corresponds to the stationary solution at $2\mu = \lambda =\pm 2\sqrt{-2\Lambda/3}$ that we already encountered in \eqref{eq:general-stationary}. On the complement of this, we can separate the variables and integrate to obtain
	\begin{align}
		\int_{\sigma_0}^\sigma \frac{8\sqrt{3 +18u + 11u^2 -\sqrt{(1-u)^3(9+7u)}}\,du}{(1-u)(1-5u)(9+7u) - 3(1+3u)\sqrt{(1-u)^3(9+7u)}}=\mp \sqrt{-\frac{2\Lambda}{3}}(t-t_0),\label{eq:integral1}\\
		\int_{\sigma_0}^\sigma \frac{8\sqrt{3 +18u + 11u^2 +\sqrt{(1-u)^3(9+7u)}}\,du}{(1-u)(1-5u)(9+7u) + 3(1+3u)\sqrt{(1-u)^3(9+7u)}}=\mp \sqrt{-\frac{2\Lambda}{3}}(t-t_0),\label{eq:integral2}
	\end{align}
	where $u$ is a dummy integration variable while $\sigma_0$ and $t_0$ are integration constants. The integration constants are redundant and the choice of $\sigma_0$ may be absorbed into the choice of $t_0$.

	We now make the general observation that if the integrand $f(u)$ of a given integral $F(\sigma):=\int^\sigma_{\sigma_0} f(u)du$ has the asymptotic behaviour $f(u)\sim(u-u_0)^\alpha$ as $u\rightarrow u_0$ and is well-defined over the half-closed interval $\left[\sigma_0,u_0\right[$ (if $\sigma_0 < u_0$) or $\left]u_0,\sigma_0\right]$ (if $\sigma_0 > u_0$), then $F(\sigma)$ converges in the limit $\sigma \rightarrow u_0$ when $\alpha > -1$ and diverges otherwise. For \eqref{eq:integral1},  the integrand has the asymptotic behaviour
	\begin{align*}
		\frac{8\sqrt{3 +18u + 11u^2 -\sqrt{(1-u)^3(9+7u)}}}{(1-u)(1-5u)(9+7u) - 3(1+3u)\sqrt{(1-u)^3(9+7u)}}&\sim -\frac{1}{\sqrt{3u}} \, &\mbox{as} \; u \rightarrow 0,\\
		\frac{8\sqrt{3 +18u + 11u^2 -\sqrt{(1-u)^3(9+7u)}}}{(1-u)(1-5u)(9+7u) - 3(1+3u)\sqrt{(1-u)^3(9+7u)}}&\sim  -\frac{2\sqrt{2}}{3\sqrt{(1-u)^3}}  \, &\mbox{as} \; u \rightarrow 1.
	\end{align*}
	Thus, the integral is well-defined in the limit $\sigma\rightarrow 0$, so we can set $\sigma_0=0$. Then, as $\sigma \rightarrow 0$, we have $t \rightarrow t_0$, while as $\sigma \rightarrow 1$, we have $t \rightarrow \pm \infty$. For all other values of $\sigma$ between these two limits, the integral is well-defined. So, the maximal domains of definition of the upper and lower solutions are $\left]t_0,+\infty\right[$ and $\left]-\infty,t_0\right[$ respectively. These can be combined into a single solution \eqref{eq:sol2}.

	For \eqref{eq:integral2}, the integrand has the asymptotic behaviour
	\begin{align*}
		\frac{8\sqrt{3 +18u + 11u^2 +\sqrt{(1-u)^3(9+7u)}}}{(1-u)(1-5u)(9+7u) + 3(1+3u)\sqrt{(1-u)^3(9+7u)}}&\sim -\frac{\sqrt{u+1}}{2} \, &\mbox{as}& \; u \rightarrow -1,\\
		\frac{8\sqrt{3 +18u + 11u^2 +\sqrt{(1-u)^3(9+7u)}}}{(1-u)(1-5u)(9+7u) + 3(1+3u)\sqrt{(1-u)^3(9+7u)}}&\sim- \frac{1}{\big(u-\frac12\big)} \, &\mbox{as}& \; u \rightarrow \frac{1}{2},\\
		\frac{8\sqrt{3 +18u + 11u^2 +\sqrt{(1-u)^3(9+7u)}}}{(1-u)(1-5u)(9+7u) + 3(1+3u)\sqrt{(1-u)^3(9+7u)}}&\sim \frac{2\sqrt{2}}{3\sqrt{(1-u)^3}} \, &\mbox{as}& \; u \rightarrow 1.
	\end{align*}
	The integral is ill-defined when $\sigma=\frac12$ lies in the domain of integration. So we have two qualitatively different choices, namely $\sigma_0 <\frac12$ and $\sigma_0 >\frac12$. In the first case, since the integral is well-defined in the limit $\sigma\rightarrow -1$, we can set $\sigma_0 = -1$. Then, as $\sigma \rightarrow -1$, we have $t \rightarrow t_0$, while as $\sigma \rightarrow \frac12$ from below, we have $t \rightarrow \mp \infty$. For all other values of $\sigma$ between these two limits, the integral is well-defined. So, the maximal domains of definition of the upper and lower solutions are $\left]-\infty,t_0\right[$ and $\left]t_0,+\infty\right[$ respectively. These can be combined into a single solution \eqref{eq:sol3}.
	
	However, if $\sigma_0 >\frac12$, say $\sigma=\frac34$, then as $\sigma\rightarrow \frac12$ from above, we have $t\rightarrow \pm\infty$ while as $\sigma \rightarrow 1$, we have $t\rightarrow \mp\infty$. For all other values of $\sigma$ between these limits, the integral is well-defined, so we have two solutions defined for all $t\in \mathbb{R}$, namely \eqref{eq:sol4}.
	
	It remains to check which of the solutions satisfy the sign constraint $\mu^2 +2\lambda\mu +4\Lambda<0$ necessary for the metric to be positive definite. Note that the condition $\mu^2 +\lambda\mu +2\Lambda=0$ automatically implies
		\begin{equation*}
			\mu^2 +2\lambda\mu +4\Lambda = 2(\mu^2 +\lambda\mu +2\Lambda) - \mu^2 = -\mu^2 < 0.
		\end{equation*}
	So the stationary solutions and the solution \eqref{eq:sol1} satisfy the sign constraint.
	
	For the rest of the solutions, we see using \eqref{eq:lamsig} that
	\begin{equation*}
		\sigma^2 + 2\sigma + \frac{4\Lambda}{\lambda^2} =   \frac{(\sigma-9)(1-\sigma) \pm 3\sqrt{(1-\sigma)^3(9+7\sigma)}}{32}.
	\end{equation*}
	For $-1<\sigma < 1$, the right-hand side is positive for the upper solution and negative for the lower solution. Multiplying by $\lambda^2$ throughout then tells us that \eqref{eq:sol2} (which corresponds to the upper solution) does not satisfy the sign constraint, while \eqref{eq:sol3} and \eqref{eq:sol4} (which correspond to the lower solution) do satisfy the sign constraint.
\end{proof}

\begin{rem}\label{rem:UHasKzero}
	For $\Lambda=-6$, the two stationary solutions are the solutions associated to the complex hyperbolic plane with holomorphic sectional curvature $-4$, while the non-stationary solutions  in \eqref{eq:sol3} and \eqref{eq:sol4}  are the solutions associated to the one-loop deformed universal hypermultiplet. 
	
	The change of coordinates between $\sigma$ and $\rho$ may be deduced from \eqref{eq:D} to be
	\begin{equation}\label{eq:rhobyc}
		\frac{\rho}{c} = -8\left(5 \mp  \sqrt{\frac{9+7\sigma}{1-\sigma}}\right)^{-1}. 
	\end{equation}
	This is well-defined for $-1 < \sigma < 1$.
	 Multiplying \eqref{mu-rho:eq} by $\frac{dt}{d\sigma}$ and using the chain rule on the right-hand side gives us
	 \begin{equation}\label{eq:consistency}
	 	\mu \frac{dt}{d\sigma} = -\frac{1}{\rho}\left(\frac{\rho +4c}{\rho + 2c}\right)\frac{d\rho}{d\sigma}.
	 \end{equation}
 	Using the explicit expressions in \eqref{eq:sol3} and \eqref{eq:sol4} we find that
 	\begin{equation*}
 		\mu \frac{dt}{d\sigma} = \frac{-64\sigma}{(1-\sigma)(1-5\sigma)(9 + 7\sigma)+3(1+3\sigma)\sqrt{(1-\sigma)^3(9+7\sigma)}}.
 	\end{equation*}
 	Meanwhile, using \eqref{eq:rhobyc} we find that 
 	\begin{equation*}
 		-\frac{1}{\rho}\left(\frac{\rho +4c}{\rho + 2c}\right)\frac{d\rho}{d\sigma} = \frac{-64\sigma}{(1-\sigma)(1-5\sigma)(9 + 7\sigma)\pm 3(1+3\sigma)\sqrt{(1-\sigma)^3(9+7\sigma)}}.
 	\end{equation*}
	Thus, we see that \eqref{eq:consistency} holds only for the upper solution 
	\begin{equation}
			\frac{\rho}{c} = -8\left(5 - \sqrt{\frac{9+7\sigma}{1-\sigma}}\right)^{-1} = -\frac{1-\sigma}{2(1 -2\sigma)}\left(5+\sqrt{\frac{9 + 7\sigma}{1 - \sigma}}\right).
	\end{equation}
	When $\frac{1}{2}<\sigma <1$, we have $\frac{\rho}{c}>0$, while when $-1<\sigma< \frac{1}{2}$, we have $\frac{\rho}{c}<-2$. This is consistent with the fact that the one-loop deformed universal hypermultiplet metric is complete over the domain $\frac{\rho}{c}>0$ and incomplete over the domain $\frac{\rho}{c}<-2$.

\end{rem}

In principle, for arbitrary values of $K$, the constraint \eqref{eq:flowlines} allows us to write $\lambda$ as an implicit function of $\mu$, which can then be used to turn \eqref{secondprime:eq} into a separable ode in $\mu$. However, as \eqref{eq:flowlines} amounts to a bivariate polynomial equation of degree $7$, the implicit function cannot be expected to have a closed form in terms of radicals. Nevertheless, it is possible to make conclusions about the completeness of the solutions, as in the next theorem. 
From Section \ref{Ricci-flat:sec} and Myer's theorem we know that the Einstein constant $\Lambda$ of any complete solution of  the Einstein equations \eqref{firstprime:eq}-\eqref{thirdprime:eq} is necessarily negative. So we may as well assume $\Lambda = -6$.

\begin{thm}\label{main:thm}
	The complex hyperbolic metric \eqref{cxhyp:eq} of constant holomorphic sectional curvature $-4$ and the one-loop deformed universal hypermultiplet metric  with $\frac{\rho}{c}>0$ are the only complete solutions of the Einstein equations \eqref{firstprime:eq}-\eqref{thirdprime:eq} for $\Lambda = -6$.
\end{thm}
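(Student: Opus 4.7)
The plan is to decompose according to the value $K$ of the first integral
\[H(\lambda,\mu) := \frac{(\mu^2+\lambda\mu+2\Lambda)^3\,P_\Lambda(\lambda,\mu)}{(\mu^2+2\lambda\mu+4\Lambda)^3}\]
supplied by Proposition~\ref{prop:K-constraint}, with $\Lambda=-6$. The Riemannian sign constraint \eqref{thirdprime:eq} confines $(\lambda,\mu)$ to the open region $\{G<0\}$ where $G:=\mu^2+2\lambda\mu+4\Lambda$. Since the metric is preserved by the principal action of the simply connected Heisenberg group and $t$ is arc-length along a normal geodesic, completeness on $\R^4$ reduces to the maximal $t$-interval of the ODE system equalling $\R$.

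For $K=0$, Proposition~\ref{prop:K-constraint} enumerates all solutions. Among them the stationary ones (isometric to the complex hyperbolic plane by Proposition~\ref{stat:prop}) and the non-stationary solutions \eqref{eq:sol4} are defined on all of $\R$; the latter are by Remark~\ref{rem:UHasKzero} the $\rho/c>0$ branch of the one-loop deformed universal hypermultiplet, complete by Proposition~\ref{1lp:prop}. The remaining Riemannian solutions \eqref{eq:sol1} and \eqref{eq:sol3} have maximal $t$-interval a proper subset of $\R$; \eqref{eq:sol2} fails the sign constraint.

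For $K\neq 0$ the plan is to show that no solution is defined for all $t\in\R$. First, the 2D flow \eqref{firstprime:eq}--\eqref{secondprime:eq} admits exactly four fixed points: the interior saddles $(\pm 4,\pm 2)$, with Jacobian eigenvalues $\{2,-6\}$ and $H=0$, and the boundary stable nodes $(\pm 2\sqrt{2},\pm 2\sqrt{2})$ with eigenvalues $\{-2\sqrt{2},-3\sqrt{2}\}$, which lie on $\{G=0\}$. On each connected component of the algebraic curve $\{H=K\}\cap\{G<0\}$ one then determines the fate of orbits as $t\to\pm\infty$. As $|\mu|\to\infty$ along an unbounded branch, asymptotic analysis of the conservation law gives $\lambda\sim -\mu$, whence \eqref{secondprime:eq} yields $\mu'\sim -\tfrac{3}{2}\mu^2$ and the $t$-interval is bounded. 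Since $G'=(\lambda-2\mu)G$, the curve $\{G=0\}$ is invariant under the flow, so an orbit in $\{G<0\}$ cannot cross into it in finite time; and $H\to\pm\infty$ on $\{G=0\}$ away from the boundary nodes rules out any asymptotic approach at finite $K$. Thus the only possible way to avoid incompleteness is for the orbit to asymptote as $t\to\pm\infty$ to the two boundary nodes, necessarily as a heteroclinic orbit along the global continuation of the slow invariant manifold (using the discrete symmetry $(\lambda,\mu)\mapsto (-\lambda,-\mu)$, under which $H$ is invariant).

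The main obstacle is excluding such heteroclinic candidates. A local computation at $(2\sqrt{2},2\sqrt{2})$, using that $\nabla P_\Lambda$ also vanishes there (so $P_\Lambda$ has a degenerate critical point along the slow eigenvector), gives $\lim_{t\to+\infty}H = 64+\sqrt{2}\,c^2$ along an orbit approaching the node with slow/fast invariant ratio $c$; in particular the slow manifold itself corresponds to $K=64$, and nearby nodes accommodate the whole interval $K\in[64,+\infty)$. The plan is then to trace these orbits globally, either by direct integration of \eqref{firstprime:eq}--\eqref{secondprime:eq} on each level set $\{H=K\}$ or by using the separatrix network $\{F=0\}\cup\{P_\Lambda=0\}$ of the $K=0$ level set as a topological barrier in the Riemannian region, and to show that such an orbit must exit $\{G<0\}$ or have $|\mu|\to\infty$ in finite reverse time. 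I expect this global tracking of the slow manifold across the phase plane, in a region containing the interior saddles and the invariant separatrices, to be the technically delicate heart of the argument.
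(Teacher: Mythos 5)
Your handling of $K=0$ matches the paper, but for $K\neq 0$ the proposal stops short of a proof precisely at the point you yourself flag as ``the technically delicate heart'': excluding a bounded complete orbit joining the two fixed points $\pm(2\sqrt{2},2\sqrt{2})$ on $\{G=0\}$. You offer only a plan (global tracking of the slow manifold, separatrices as barriers, or direct integration of the level sets), and no actual argument. The paper disposes of this with a short topological observation that your proposal is missing: such an orbit lies on a level set with $K\neq 0$, yet its two endpoints $\pm(2\sqrt{2},2\sqrt{2})$ lie in \emph{different} connected components of the complement of the hyperbola $\{\mu^2+\lambda\mu+2\Lambda=0\}$ (which is disjoint from $\{G=0\}$), so the orbit would have to cross that hyperbola; but on it the conserved quantity in \eqref{eq:flowlines} vanishes, forcing $K=0$ --- a contradiction. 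Your local computation at the node (the value of $K$ attained along the slow direction) does not substitute for this global step. In addition, your reduction ``the only way to avoid incompleteness is a heteroclinic orbit between the boundary nodes'' is itself not justified: you have not excluded orbits that stay in a compact subset of $\{G<0\}$ forever, e.g.\ periodic orbits on an oval of $\{H=K\}$ or convergence to the interior stationary points $(\pm 4,\pm 2)$ (the latter is easy, since $H=0$ there, but you do not say it; the former needs a monotonicity or Dulac-type argument). The paper supplies exactly this via the Lyapunov-type function $f=2(\lambda+k\mu)P_\Lambda^2$ with $f'=-hP_\Lambda^2\le 0$ for $3<k\le \tfrac{26+6\sqrt{10}}{3}$ (using \eqref{firstprime:eq}, \eqref{secondprime:eq}, \eqref{eq:Plamb}), which forces $P_\Lambda\to 0$ along any bounded complete orbit and hence, for $K\neq 0$, convergence to $\pm(2\sqrt{2},2\sqrt{2})$.

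There is also a smaller gap in your unbounded case: you only treat $|\mu|\to\infty$, where the constraint gives $\lambda\sim-\mu$ and $2\mu'=3\lambda\mu+4\Lambda\sim-3\mu^2$ yields finite-time blow-up. But on the level sets the asymptote $\mu/\lambda\to 0$ with $|\lambda|\to\infty$ (and $\mu$ not necessarily unbounded) is not excluded a priori; the paper handles both regimes uniformly by passing to the projective closure of the polynomial level curve, showing $\sigma\in\{-1,0\}$ resp.\ $\nu=-1$ in the limit, and then deriving the contradiction from $\bigl(\tfrac{2}{\lambda}\bigr)'\to -3$ or $1$ and $\bigl(\tfrac{2}{\mu}\bigr)'\to 3$ while $\tfrac{1}{\lambda}$, resp.\ $\tfrac{1}{\mu}$, tends to $0$ over an infinite time interval (this also settles the issue that unboundedness initially only gives blow-up along a sequence $t_i\to\infty$). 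So the overall decomposition is the same as the paper's, but the two decisive devices --- the monotone function $f$ and the hyperbola-separation argument --- are absent, and without them the $K\neq 0$ case remains open in your write-up.
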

\begin{proof}
	Suppose we have a complete solution $(\lambda(t), \mu(t))$ of \eqref{firstprime:eq}-\eqref{thirdprime:eq}. The solution is either bounded in both the limits $t\rightarrow \pm\infty$ or unbounded in at least one.

	We first consider the bounded case. Fix a real number $k$ and define
	\begin{align*}
		f(\lambda,\mu)&= 2(\lambda + k\mu)P_\Lambda(\lambda,\mu)^2, \\
		h(\lambda,\mu)&= \lambda^2+(2+12k)\mu^2+(18-3k)\lambda\mu +(12-4k) \Lambda.
	\end{align*}
	By \eqref{firstprime:eq}, \eqref{secondprime:eq}, and \eqref{eq:Plamb}, we have
	\begin{equation*}
		f' = -hP_\Lambda^2.
	\end{equation*}
	The function $h$ is strictly positive, and so $-hP_\Lambda^2$ is non-positive, for all $\lambda,\mu$ and all $\Lambda<0$ whenever $k$ satisfies
	\begin{equation}\label{eq:range}
		3 < k \le \frac{26 + 6\sqrt{10}}{3}.
	\end{equation}
	Thus, given that $k$ is in the above range, we have a  monotonically decreasing function $f(\lambda(t),\mu(t))$ of $t$. 
	
	Any monotonically decreasing function of $t$ is either unbounded or has well-defined (finite) limits as $t\rightarrow \pm \infty$. Since our solution is assumed to be bounded, it has to be the latter case. In fact, 
	the limiting value of $f$ must be one for which $f'=-hP_\Lambda^2$ (and hence $P_\Lambda$) vanishes. Thus, we have a well-defined limit
	\begin{equation*}
		\lim_{t\rightarrow \pm\infty} P_\Lambda(\lambda(t),\mu(t)) = 0.
	\end{equation*}	
	The constant $K$ in the constraint \eqref{eq:flowlines} is either zero or non-zero. We have already explicitly described the $K=0$ case in Proposition \ref{prop:K-constraint}  and seen in Remark \ref{rem:UHasKzero} that the only complete solutions for $\Lambda = -6$ correspond to precisely the complex hyperbolic metric with holomorphic sectional curvature $-4$ and the one-loop deformed universal hypermultiplet metric with $\frac{\rho}{c}>0$. So we may assume $K \neq 0$ now. Since the solution is bounded, this implies that
	\begin{equation*}
		\lim_{t\rightarrow \pm\infty} (\mu(t)^2 + 2\lambda(t)\mu(t) + 4\Lambda)^3 = 	\lim_{t\rightarrow \pm\infty} \frac{1}{K}(\mu(t)^2 + \lambda(t)\mu(t) + 2\Lambda)^3P_\Lambda(\lambda(t),\mu(t)) = 0.
	\end{equation*}
	The vanishing sets of the polynomials $\mu^2 + 2\lambda \mu+ 4\Lambda$ and $P_\Lambda$ intersect in precisely the 
	points $\pm \Big(  \frac{2\sqrt{-\Lambda}}{\sqrt 3},  \frac{2\sqrt{-\Lambda}}{\sqrt 3}\Big)$. 
	 These can be checked to be fixed points of the first-order ode system \eqref{firstprime:eq} and \eqref{secondprime:eq}.  Moreover, $\Big(  \frac{2\sqrt{-\Lambda}}{\sqrt 3},  \frac{2\sqrt{-\Lambda}}{\sqrt 3}\Big)$ is a stable fixed point, while $-\Big(  \frac{2\sqrt{-\Lambda}}{\sqrt 3},  \frac{2\sqrt{-\Lambda}}{\sqrt 3}\Big)$ is an unstable fixed point. It thus follows that
	\begin{equation*}
	\lim_{t\rightarrow \pm\infty} (\lambda(t),\mu(t)) = \pm\bigg( \frac{2\sqrt{-\Lambda}}{\sqrt 3},  \frac{2\sqrt{-\Lambda}}{\sqrt 3}\bigg).
	\end{equation*}
	Now, as the vanishing set of $\mu^2 + \lambda \mu + 2\Lambda$ is a hyperbola, its complement in $\R^2$ consists of three connected components. The two fixed points 
	$\pm\Big(  \frac{2\sqrt{-\Lambda}}{\sqrt 3},  \frac{2\sqrt{-\Lambda}}{\sqrt 3}\Big)$ are on two different connected components, hence any complete solution has to intersect the hyperbola $\mu^2 + \lambda \mu + 2\Lambda=0$. But this is not possible since $K$ vanishes if  $\mu^2 + \lambda \mu + 2\Lambda$ vanishes, and we have assumed that $K\neq 0$.
	
	Next, we consider the case where the solution $(\lambda(t), \mu(t))$ is unbounded in at least one of the limits $t\rightarrow \pm\infty$. Without loss of generality, we may assume the solution is unbounded as $t\rightarrow +\infty$. Then there exists a sequence $t_i\rightarrow +\infty$ such that at 
	least one of the following holds: 
	\begin{equation*}
		\lim_{i\rightarrow \infty}   \frac{1}{\lambda(t_i)}=	\lim_{i\rightarrow \infty}  \left(\frac{1}{\lambda(t_i)}\right)'=0 \quad \mbox{or} \quad \lim_{i\rightarrow \infty}  \frac{1}{\mu(t_i)}=\lim_{i\rightarrow \infty}  \left(\frac{1}{\mu(t_i)}\right)'=0.
	\end{equation*}
	
	Since the solution satisfies a polynomial constraint, by regarding it  as the vanishing set of a polynomial on $\R\mathbb{P}^2$, we  even see that 
	$\lim_{t\rightarrow \infty} \frac{1}{\lambda (t)} =0$ or  $\lim_{t\rightarrow \infty} \frac{1}{\mu (t)} =0$.

	If we let $\sigma = \frac{\mu}{\lambda}$ and $\nu = \frac{\lambda}{\mu}$, then the constraint \eqref{eq:flowlines} may be rewritten in the following manner over suitable domains:
	\begin{align*}
	\left(\frac{\sigma +1 +2\Lambda\lambda^{-2}}{\sigma +2 +4\Lambda\lambda^{-2}}\right)^3 \left((1 + \sigma)^3\sigma +\frac{2\Lambda}{3\lambda^2}(3 +18\sigma + 11\sigma^2) + \frac{128\Lambda^2}{9\lambda^4}\right) &= \frac{K}{\lambda^4},\\
	\left(\frac{1 +\nu +2\Lambda\mu^{-2}}{1 +2\nu +4\Lambda\mu^{-2}}\right)^3 \left((\nu + 1)^3 +\frac{2\Lambda}{3\mu^2}(3\nu^2 +18\nu + 11) + \frac{128\Lambda^2}{9\mu^4}\right) &= \frac{K}{\mu^4},
	\end{align*}
	where $K$ is some constant.	The above imply that in the limit $|\lambda| \rightarrow \infty$, we have either $\sigma = -1$ or $\sigma = 0$, while in the limit $|\mu| \rightarrow \infty$, we have $\nu = -1$. Meanwhile \eqref{firstprime:eq} and \eqref{secondprime:eq} imply that
	\begin{align*}
	\left(\frac{2}{\lambda}\right)' &= 1 + 2\sigma^2 + 6\sigma + \frac{12\Lambda}{\lambda^2},\\
	\left(\frac{2}{\mu}\right)' &= -3\nu -\frac{4\Lambda}{\mu^2}.
	\end{align*}
	Thus, in the limit $|\lambda| \rightarrow \infty$, we have either $\big(\frac{2}{\lambda}\big)' \rightarrow -3$ or $\big(\frac{2}{\lambda}\big)'\rightarrow  1$, while in the limit $|\mu| \rightarrow \infty$, we have $\big(\frac{2}{\lambda}\big)'\rightarrow3$. This gives a contradiction.
\end{proof}


\begin{thebibliography}{ACDM}
\bibitem[AMTV]{AMTV} I.\ Antoniadis, R.\ Minasian, S.\ Theisen and P.\ Vanhove, ``String loop corrections
to the universal hypermultiplet,'' \emph{Class. Quant. Grav.} 20 (2003), no. 23, 5079--5102.
\bibitem[ACDM]{ACDM} D.V. Alekseevsky, V.\ Cort\'es, M. Dyckmanns and T. Mohaupt, ``Quaternionic K\"ahler metrics associated with special K\"ahler manifolds,'' \emph{J. Geom. Phys.} 92 (2015), 271--287.
\bibitem[CS]{CS} V.\ Cort\'es and A.\ Saha, ``Quarter-pinched Einstein metrics interpolating between real and complex hyperbolic metrics,'' \emph{Math. Z.} 290 (2018), 155--166.
\bibitem[CST]{CST} V.\ Cort\'es, A.\ Saha, and D.\ Thung, ``Symmetries of quaternionic K\"ahler manifolds with 
$S^1$-symmetry,'' \emph{Trans.\ London Math.\ Soc.} 8, no.\ 1, (2021), 95--119.
\bibitem[DH]{DH} M.\ Dunajski and M.\ Hoegner, ``SU(2) solutions to self-duality equations in eight dimensions,'' \emph{J.\ Geom.\ Phys.} 62, no.\ 8, (2012), 1747--1759.
\bibitem[EM]{EM} G.\ F.\ R.\ Ellis and  M.\ A.\ H.\ MacCallum, ``A class of cosmological models,'' \emph{Commun.\ Math.\ Phys.} 12 (1969), 108--141. 
\bibitem[HSVZ]{HSVZ} H.-J.\ Hein, S.\ Sun, J.\ Viaclovsky and R.\ Zhang,  ``Nilpotent structures and collapsing Ricci-flat metrics on K3 surfaces,'' J.\ Amer.\ Math.\ Soc., to appear.
	\bibitem[RSV]{RSV2006}
	D.~Robles Llana, F.~Saueressig, and S.~Vandoren, ``String Loop Corrected Hypermultiplet Moduli Spaces,'' \emph{J.~High Energy Phys.}~0603 (2006), 081. 
	\bibitem[S]{S} S.\ Salamon, ``Lie groups and special holonomy,'' Geometria Diferencial Unicamp, 
	Geometry Webinar AmSur/AmSul 29,\\ https://www.youtube.com/watch?v=ylixrP0jlPE.
	\bibitem[W]{W} E. N. Wilson, ``Isometry groups on homogeneous nilmanifolds,'' \emph{Geometriae Dedicata} 12 (1982), 337--346.
	\end{thebibliography}
\end{document}